\newtheorem{thm}{Theorem}[section]
\newtheorem{cor}[thm]{Corollary}
\newtheorem{lem}[thm]{Lemma}
\newtheorem{prop}[thm]{Proposition}
\newtheorem{defn}[thm]{Definition}
\theoremstyle{definition}
\newtheorem{rem}{Remark}
\newtheorem{examp}{Example}
\newcommand{\rr}{\mathbb{R}}
\newcommand{\nn}{\mathbb{N}}
\newcommand{\ee}{\varepsilon}
\newcommand{\meg}{\geqslant}
\newcommand{\mik}{\leqslant}
\newcommand{\ave}{\mathbb{E}}
\newcommand{\bo}{\Omega}
\newcommand{\pp}{\mathbb{P}}
\newcommand{\cala}{\mathcal{A}}
\newcommand{\calb}{\mathcal{B}}
\newcommand{\calc}{\mathcal{C}}
\newcommand{\calf}{\mathcal{F}}
\newcommand{\cals}{\mathcal{S}}
\newcommand{\calp}{\mathcal{P}}
\newcommand{\calq}{\mathcal{Q}}
\newcommand{\calr}{\mathcal{R}}
\newcommand{\calz}{\mathcal{Z}}
\begin{document}

\title{Szemer\'{e}di's regularity lemma via martingales}

\author{Pandelis Dodos, Vassilis Kanellopoulos and Thodoris Karageorgos}

\address{Department of Mathematics, University of Athens, Panepistimiopolis 157 84, Athens, Greece}
\email{pdodos@math.uoa.gr}

\address{National Technical University of Athens, Faculty of Applied Sciences,
Department of Mathematics, Zografou Campus, 157 80, Athens, Greece}
\email{bkanel@math.ntua.gr}

\address{Department of Mathematics, University of Athens, Panepistimiopolis 157 84, Athens, Greece}
\email{tkarageo@math.uoa.gr}

\thanks{2010 \textit{Mathematics Subject Classification}: 05C35, 28A05, 60G42.}
\thanks{\textit{Key words}: Szemer\'{e}di's regularity lemma, semiring, norm, martingale difference sequences.}

\maketitle

%------------------------Abstract-------------------------------%

\begin{abstract}
We prove a variant of the abstract probabilistic version of Szemer\'{e}di's regularity lemma, due to Tao, which applies 
to a number of structures (including graphs, hypergraphs, hypercubes, graphons, and many more) and works for random variables
in $L_p$ for any $p>1$. Our approach is based on martingale difference sequences.
\end{abstract}

%--------------------------Introduction-----------------------%

\section{Introduction}

\numberwithin{equation}{section}

\subsection*{1.1}

The aim of the present paper is to prove a variant of the abstract probabilistic version of Szemer\'{e}di's regularity lemma,
due to Tao \cite{Tao1,Tao2,Tao3}. This variant applies to a number of combinatorial structures---including graphs, hypergraphs, 
hypercubes, graphons, and many more---and works for random variables in $L_p$ for any $p>1$. A proper exposition of our main
result requires some preparatory work and hence, at this point, we will not discuss it in detail. Instead, we will focus
on the following model case which is representative of the contents of this paper.

\subsection*{1.2}

A very basic fact of probability theory is that the set of simple functions is dense in $L_1$. Actually, this fact is so basic
that it is hardly mentioned when applied. But how do we approximate a given random variable by a simple function? More precisely,
given an integrable random variable $f\colon [0,1]\to\rr$ and a real $0<\ee\mik 1$ (that we regard as an error) we are asking for
an effective method to locate a simple function $s\colon [0,1]\to\rr$ such that $\|f-s\|_{L_1}\mik \ee$.

It turns out that there is a natural greedy algorithm for this problem which we are about to describe. We start by setting
$\calf_0=\big\{\emptyset,[0,1]\big\}$ and $f_0=\ave(f\, | \, \calf_0)$. That is, $\calf_0$ is the trivial $\sigma$-algebra on $[0,1]$
and $f_0$ is the conditional expectation of $f$ with respect to $\calf_0$ (see, e.g., \cite{Du}). Notice that $f_0$ is constant
and equal to the expected value $\ave(f)$ of $f$. Thus, if $\|f-f_0\|_{L_1}\mik \ee$, then we are done. Otherwise, by considering 
the support of the positive part or the negative part of $f-f_0$, we may select a measurable subset $A_0$ of $[0,1]$ such that 
\begin{equation} \label{e1.1}
\frac{\ee}{2} < \big| \int_{A_0} (f-f_0)\, dt \big|.
\end{equation}
Next we set $\calf_1=\sigma(\calf_0 \cup\{A_0\})$ and  $f_1=\ave(f\, | \, \calf_1)$. (That is, $\calf_1$ is the smallest
$\sigma\text{-algebra}$ on $[0,1]$ that contains all elements of $\calf_0$ and $A_0$, and $f_1$ is the conditional expectation
of $f$ with respect to $\calf_1$.) Observe that, by \eqref{e1.1}, we have
\begin{equation} \label{e1.2}
\frac{\ee}{2} < \big| \int_{A_0} (f_1-f_0)\, dt \big| \mik \|f_1-f_0\|_{L_1}.
\end{equation}
Also notice that $f_1$ is a simple function since the $\sigma$-algebra $\calf_1$ is finite, and so if $\|f-f_1\|_{L_1}\mik \ee$, 
then we can stop this process. On the other hand, if $\|f-f_1\|_{L_1}>\ee$, then we select a measurable subset $A_1$ of $[0,1]$ 
such that $|\int_{A_1} (f-f_1)\, dt|> \ee/2$ and we continue similarly.

The next thing that one is led to analyze is whether this algorithm will eventually terminate and, if yes, at what speed.
To this end, notice that if the algorithm runs forever, then it produces an increasing sequence $(\calf_i)$ of finite 
$\sigma$-algebras of $[0,1]$ and a sequence $(f_i)$ of random variables with $f_i=\ave(f\, | \, \calf_i)$ for every
$i\in\nn$ and such that $\|f_i-f_{i-1}\|_{L_1}> \ee/2$\, if\, $i\meg 1$. In other words, $(f_i)$ is a martingale adapted to the
filtration $(\calf_i)$ whose successive differences are bounded away from zero in the $L_1$ norm. This last piece of information
is the key observation of this analysis since successive differences of martingales, known as martingale difference sequences,
are highly structured sequences of random variables. In particular, if the given random variable $f$ belongs to $L_p$ for some
$1< p\mik 2$, then for every integer $n\meg 1$ we have
\begin{equation} \label{e1.3}
\Big( \sum_{i=1}^n \|f_i-f_{i-1}\|^2_{L_p}\Big)^{1/2} \mik \Big( \frac{1}{p-1}\Big)^{1/2} \cdot \|f\|_{L_p}.
\end{equation}
This functional analytic estimate is sharp, and was recently proved by Ricard and Xu \cite{RX} who deduced it from 
a uniform convexity inequality for $L_p$ spaces. We briefly comment on these results in Appendix A. 

Of course, with inequality \eqref{e1.3} at our disposal, it is very easy to analyze the greedy algorithm described above.
Precisely,  by \eqref{e1.3} and the monotonicity of the $L_p$ norms, we see that if $f\in L_p$ for some $1<p \mik 2$, then 
this algorithm will terminate after at most $\lfloor 4\, \|f\|^2_{L_p} \ee^{-2} (p-1)^{-1}\rfloor+1$ iterations.

\subsection*{1.3}

Our main result (Theorem \ref{t3.1} in Section 3) follows the method outlined above, but with two important extra features.

First, our approximation scheme is more demanding in the sense that the simple function we wish to locate is required
to be a linear combination of characteristic functions of sets belonging to a given class. It is useful to view the sets 
in this class as being ``structured"\!, though for the purpose of performing the greedy algorithm only some (not particularly
restrictive) stability properties are needed. These properties are presented in Definition \ref{d2.1} in Section 2, together
with several related examples. 

Second, the error term of the approximation is controlled not only by the $L_p$ norm but also by a certain ``uniformity norm" 
which depends on the class of ``structured" sets with which we are dealing (see Definition \ref{d2.2} in Section 2). 
This particular feature is already present in Tao's work and can be traced to \cite{FrK}. 

Finally, we note that in Section 4 we discuss some applications, including a regularity lemma for hypercubes and
an extension of the strong regularity lemma to $L_p$ graphons for any $p>1$. More applications will appear in \cite{DKK}.

\subsection*{1.4}

By $\nn=\{0,1,2,\dots\}$ we denote the set of natural numbers. As usual, for every positive integer $n$ we set
$[n]\coloneqq \{1,\dots,n\}$. For every function $f\colon\nn\to\nn$ and every $\ell\in\nn$ by $f^{(\ell)}\colon\nn\to\nn$ 
we shall denote the $\ell$-th iteration of $f$ defined recursively by $f^{(0)}(n)=n$ and $f^{(\ell+1)}(n)=f\big(f^{(\ell)}(n)\big)$
for every $n\in\nn$. All other pieces of notation we use are standard.

%--------------------------Semirings--------------------------%

\section{Semirings and their uniformity norms}

\numberwithin{equation}{section}

We begin by introducing the following slight strengthening of the classical concept of a semiring of sets (see also \cite{BN}).
\begin{defn} \label{d2.1}
Let $\Omega$ be a nonempty set and $k$ a positive integer. Also let $\cals$ be a collection of subsets of $\Omega$. 
We say that $\mathcal{S}$ is a \emph{$k$-semiring on $\Omega$} if the following properties are satisfied.
\begin{enumerate}
\item[(P1)] We have that\, $\emptyset,\Omega\in\cals$.
\item[(P2)] For every $S,T\in\cals$ we have that $S\cap T\in\cals$.
\item[(P3)] For every $S,T\in\cals$ there exist $\ell\in [k]$ and $R_1,\dots,R_{\ell}\in \cals$ which are pairwise disjoint 
and such that $S\setminus T= R_1\cup\cdots\cup R_{\ell}$.
\end{enumerate}
\end{defn}
As we have already indicated in the introduction, we view every element of a $k\text{-semiring}$ $\cals$ as a ``structured" set and
a linear combination of few characteristic functions of elements of $\cals$ as a ``simple" function. We will use the following norm 
in order to quantify how far from being ``simple" a given function is.
\begin{defn} \label{d2.2}
Let $(\Omega,\calf,\mathbb{P})$ be a probability space, $k$ a positive integer and $\cals$ a $k$-semiring on $\Omega$ with 
$\cals\subseteq \calf$. For every $f\in L_1(\Omega,\calf,\mathbb{P})$ we set
\begin{equation} \label{e2.1}
\|f\|_{\cals} = \sup\Big\{ \big| \int_S f \, d\mathbb{P} \big| : S\in \cals\Big\}.
\end{equation}
The quantity $\|f\|_{\cals}$ will be called the \emph{$\cals$-uniformity norm} of $f$.
\end{defn}
The $\cals$-uniformity norm is, in general, a seminorm. Note, however, that if the $k$-semiring $\cals$ is sufficiently rich, then 
the function $\|\cdot\|_{\cals}$ is indeed a norm. More precisely, the function $\|\cdot\|_{\cals}$ is a norm if and only if the family 
$\{\mathbf{1}_S:S\in \cals\}$ separates points in $L_1(\Omega,\calf,\mathbb{P})$, that is, for every $f,g\in L_1(\Omega,\calf,\mathbb{P})$
with $f\neq g$ there exists $S\in\cals$ with $\int_S f\, d\mathbb{P}\neq \int_S g\, d\mathbb{P}$.

The simplest example of a $k$-semiring on a nonempty set $\Omega$, is an algebra of subsets of $\Omega$. Indeed, observe that a family
of subsets of $\Omega$ is a $1$-semiring if and only if it is an algebra. Another basic example is the collection of all intervals of
a linearly ordered set, a family which is easily seen to be a $2$-semiring. More interesting (and useful) $k$-semirings can be constructed
with the following lemma.
\begin{lem} \label{l2.3}
Let $\Omega$ be a nonempty set. Also let $m, k_1,\dots, k_m$ be positive integers and set $k=\sum_{i=1}^m k_i$.
If\, $\cals_i$ is a $k_i$-semiring on $\Omega$ for every $i\in [m]$, then the family
\begin{equation} \label{e2.2}
\cals=\Big\{\bigcap_{i=1}^m S_i: S_i\in\cals_i \text{ for every } i\in [m] \Big\}
\end{equation}
is a $k$-semiring on $\Omega$.
\end{lem}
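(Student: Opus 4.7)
Verifying the three axioms for $\cals$:

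\textbf{(P1).} Since $\emptyset, \Omega \in \cals_i$ for each $i \in [m]$, the intersections $\bigcap_{i=1}^m \emptyset = \emptyset$ and $\bigcap_{i=1}^m \Omega = \Omega$ witness that $\emptyset, \Omega \in \cals$.

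\textbf{(P2).} If $S = \bigcap_{i=1}^m S_i$ and $T = \bigcap_{i=1}^m T_i$ with $S_i, T_i \in \cals_i$, then $S \cap T = \bigcap_{i=1}^m (S_i \cap T_i)$, and each $S_i \cap T_i$ lies in $\cals_i$ by property (P2) for $\cals_i$. Hence $S \cap T \in \cals$.

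\textbf{(P3).} This is the only step that requires genuine work. My plan is first to reduce to the situation in which $T_i \subseteq S_i$ for every $i$, by replacing $T_i$ with $S_i \cap T_i \in \cals_i$ (using (P2) for $\cals_i$); this replacement does not change $S \cap T$ or, therefore, $S \setminus T$. Next I telescope $S \setminus T$ according to the first index at which $x$ falls out of $T$: for every $i \in [m]$, let
\[
A_i = T_1 \cap \cdots \cap T_{i-1} \cap (S_i \setminus T_i) \cap S_{i+1} \cap \cdots \cap S_m.
\]
Since $T_j \subseteq S_j$ for $j<i$, one checks directly that the $A_i$ are pairwise disjoint and that $S \setminus T = A_1 \cup \cdots \cup A_m$. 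Finally, I apply (P3) to each $\cals_i$: write $S_i \setminus T_i = R_{i,1} \cup \cdots \cup R_{i,\ell_i}$ as a disjoint union of $\ell_i \in [k_i]$ elements of $\cals_i$, which yields a decomposition of $A_i$ into $\ell_i$ pairwise disjoint members of $\cals$ (each being an intersection with one factor from every $\cals_j$). Taking the union over $i$ exhibits $S \setminus T$ as a disjoint union of $\sum_{i=1}^m \ell_i \leqslant \sum_{i=1}^m k_i = k$ elements of $\cals$, so (P3) holds. (Empty $A_i$ are simply discarded; if $S \setminus T = \emptyset$ we take $R_1 = \emptyset$.)

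The bookkeeping in (P3) is the only delicate point: one must be careful that the telescoping $A_i$ really are disjoint (which is why the reduction $T_i \subseteq S_i$ is convenient), and that each $A_i$ is expressed with exactly one factor from each $\cals_j$ so that the resulting pieces lie in $\cals$ rather than merely in a larger algebra. Once those two points are in place, the count of $k_1 + \cdots + k_m$ pieces is automatic.
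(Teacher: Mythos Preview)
Your proof is correct and follows essentially the same telescoping strategy as the paper's proof: decompose $S\setminus T$ according to the first index $i$ at which a point escapes $T_i$, then apply (P3) in each $\cals_i$ to the factor $S_i\setminus T_i$. The only cosmetic difference is your preliminary reduction $T_i\mapsto S_i\cap T_i$, which the paper handles instead by writing the prefix factor as $B_j=\bigcap_{i<j}(S_i\cap T_i)$; the resulting pieces and the count $\sum_i\ell_i\leqslant k$ are identical.
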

\begin{proof}
Clearly we may assume that $m\meg 2$. Notice, first, that the family $\cals$ satisfies properties (P1) and (P2) in Definition \ref{d2.1}.
To see that property (P3) is also satisfied, fix $S, T\in\cals$ and write $S =\bigcap_{i=1}^m S_i$ and $T=\bigcap_{i=1}^m T_i$ where 
$S_i, T_i\in\cals_i$ for every $i\in [m]$. We set $P_1=\Omega\setminus T_1$ and $P_j=T_1\cap\cdots\cap T_{j-1}\cap (\Omega\setminus T_j)$
if $j\in\{2,\dots,m\}$. Observe that the sets $P_1,\dots, P_m$ are pairwise disjoint. Moreover,
\begin{equation} \label{e2.3}
\Omega\setminus \Big(\bigcap_{i=1}^m T_i \Big)  =  \bigcup_{j=1}^m P_j 
\end{equation}
and so
\begin{equation} \label{e2.4}
S\setminus T = \Big( \bigcap_{i=1}^m S_i\Big) \setminus \Big( \bigcap_{i=1}^m T_i\Big) =
\bigcup_{j=1}^m\Big(\bigcap_{i=1}^m S_i \cap P_j\Big). 
\end{equation}
Let $j\in [m]$ be arbitrary. Since $\mathcal{S}_j$ is a $k_j$-semiring, there exist $\ell_j\in [k_j]$ and pairwise disjoint sets
$R^j_1,\dots, R^j_{\ell_j}\in\cals_j$ such that $S_j\setminus T_j=R^j_1\cup \cdots \cup R^j_{\ell_j}$. Thus, setting
\begin{enumerate}
\item[(a)] $B_1=\Omega$ and $B_j=\bigcap_{1\mik i<j}(S_i\cap T_i)$ if $j\in\{2,\dots,m\}$,
\item[(b)] $C_j=\bigcap_{j<i\mik m} S_i$ if $j\in \{1,\dots,m-1\}$ and $C_m=\Omega$,
\end{enumerate}
and invoking the definition of the sets $P_1,\dots,P_m$ we obtain that
\begin{equation} \label{e2.5}
S\setminus T = \bigcup_{j=1}^m \Big( \bigcup_{n=1}^{\ell_j} \big( B_j \cap R^j_n\cap C_j \big) \Big).
\end{equation}
Now set $I=\bigcup_{j=1}^{m} \big(\{j\}\times [\ell_j]\big)$ and observe that $|I|\mik k$. For every $(j,n)\in I$ let $U^j_n=B_j \cap R^j_n\cap C_j$
and notice that $U^j_n\in\cals$, $U_n^j\subseteq R_n^j$ and $U_n^j\subseteq P_j$. It follows that the family $\{U^j_n: (j,n)\in I\}$ is contained
in $\cals$ and consists of pairwise disjoint sets. Moreover, by \eqref{e2.5}, we have
\begin{equation} \label{e2.6}
S\setminus T= \bigcup_{(j,n)\in I} U_n^j.
\end{equation}
Hence, the family $\cals$ satisfies property (P3) in Definition \ref{d2.1}, as desired.
\end{proof}
By Lemma \ref{l2.3}, we have the following corollary.
\begin{cor} \label{c2.4}
The following hold.
\begin{enumerate}
\item[(a)] Let $\Omega$ be a nonempty set. Also let $k$ be a positive integer and for every $i\in [k]$ let $\cala_i$ be an algebra on $\Omega$.
Then the family
\begin{equation} \label{e2.7}
\{A_1\cap \cdots \cap A_k: A_i\in \cala_i \text{ for every } i\in [k]\}
\end{equation}
is a $k$-semiring on $\Omega$.
\item[(b)] Let $d,k_1,\dots,k_d$ be a positive integers and set $k=\sum_{i=1}^d k_i$. Also let $\Omega_1,\dots,\Omega_d$ be nonempty sets and
for every $i\in [d]$ let $\cals_i$ be a $k_i$-semiring on $\Omega_i$. Then the family
\begin{equation} \label{e2.8}
\{S_1\times \cdots\times S_d: S_i\in \cals_i \text{ for every } i\in [d] \}
\end{equation}
is $k$-semiring on $\Omega_1\times\cdots\times \Omega_d$.
\end{enumerate}
\end{cor}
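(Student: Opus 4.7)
Both parts are essentially direct applications of Lemma \ref{l2.3}; the only work is to recast the data into the form required by that lemma.

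For part (a), I would use the observation already made in the text: a family of subsets of $\Omega$ is a $1$-semiring if and only if it is an algebra. Hence each $\cala_i$ is a $1$-semiring on $\Omega$, and Lemma \ref{l2.3} applied with $k_i=1$ for every $i\in [k]$ shows that the family \eqref{e2.7} is a $k'$-semiring on $\Omega$, where $k'=\sum_{i=1}^k 1 = k$. This is exactly the conclusion.

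For part (b), the natural idea is to lift each semiring $\cals_i$ to the product space via the coordinate projection. Let $\bo=\bo_1\times\cdots\times\bo_d$ and let $\pi_i\colon \bo\to\bo_i$ denote the projection onto the $i$-th coordinate. For every $i\in [d]$ I set
\[
\widetilde{\cals}_i \coloneqq \{\pi_i^{-1}(S):S\in\cals_i\} = \Big\{\bo_1\times\cdots\times\bo_{i-1}\times S\times \bo_{i+1}\times\cdots\times\bo_d : S\in\cals_i\Big\}.
\]
The main small verification is that $\widetilde{\cals}_i$ is itself a $k_i$-semiring on $\bo$. This is straightforward since $\pi_i^{-1}$ commutes with all the Boolean operations involved in Definition \ref{d2.1}: it preserves $\emptyset$ and the full space, it preserves intersections, and $\pi_i^{-1}(S)\setminus \pi_i^{-1}(T)=\pi_i^{-1}(S\setminus T)$, so a decomposition of $S\setminus T$ into at most $k_i$ pairwise disjoint members of $\cals_i$ pulls back to a decomposition of the same length in $\widetilde{\cals}_i$.

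With this in hand I would apply Lemma \ref{l2.3} to the family $\widetilde{\cals}_1,\dots,\widetilde{\cals}_d$, obtaining a $k$-semiring on $\bo$ (with $k=\sum_{i=1}^d k_i$) consisting of all intersections of the form $\bigcap_{i=1}^d \pi_i^{-1}(S_i)$. The last step is the identification
\[
\bigcap_{i=1}^d \pi_i^{-1}(S_i) = S_1\times\cdots\times S_d,
\]
which shows that this $k$-semiring is exactly the family \eqref{e2.8}, as required.

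There is no real obstacle in either part; the only thing to be careful about is the cylinder-set verification in (b), i.e.\ checking that pulling a $k_i$-semiring back along a projection yields a $k_i$-semiring (with the same constant $k_i$) and not something larger. Once this is noted, the result is an immediate corollary of Lemma \ref{l2.3}.
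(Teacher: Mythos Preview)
Your proposal is correct and follows exactly the approach of the paper, which simply states that the corollary follows from Lemma~\ref{l2.3}. The lifting of each $\cals_i$ to the product via $\pi_i^{-1}$ that you spell out for part~(b) is precisely the (unstated) identification needed to invoke Lemma~\ref{l2.3}, and your verification that this preserves the $k_i$-semiring constant is the only nontrivial point.
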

Next we isolate some basic properties of the $\cals$-uniformity norm.
\begin{lem} \label{l2.5}
Let $(\Omega,\calf,\mathbb{P})$ be a probability space, $k$ a positive integer and $\cals$ a $k\text{-semiring}$ on $\Omega$
with $\cals\subseteq \calf$. Also let $f\in L_1(\Omega,\calf,\mathbb{P})$. Then the following hold.
\begin{enumerate}
\item[(a)] We have $\|f\|_{\cals}\mik \|f\|_{L_1}$.
\item[(b)] If $\mathcal{B}$ is a $\sigma$-algebra on $\Omega$ with $\mathcal{B}\subseteq \cals$, 
then $\|\ave(f\, | \, \mathcal{B})\|_{\cals}\mik \|f\|_{\cals}$.
\item[(c)] If $\cals$ is a $\sigma$-algebra, then $\|f\|_{\cals} \mik \|\ave(f\, | \, \cals)\|_{L_1}\mik 2 \|f\|_{\cals}$.
\end{enumerate}
\end{lem}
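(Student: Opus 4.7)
The plan is to treat the three parts separately; each follows mechanically once one combines the definition of $\|\cdot\|_{\cals}$ with the defining property of conditional expectation and, where needed, a decomposition via sign-level sets. Part (a) is immediate: for every $S\in\cals$ the triangle inequality yields $|\int_S f\, d\pp|\mik\int_S |f|\, d\pp\mik \|f\|_{L_1}$, and taking the supremum over $S$ gives the claim.

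For part (b), I would set $h=\ave(f\,|\,\calb)$ and introduce the sign-level sets $B_+=\{h\meg 0\}$ and $B_-=\{h<0\}$, which lie in $\calb$ and hence, by hypothesis, in $\cals$. Given $S\in\cals$, I split $\int_S h\, d\pp$ into contributions on $S\cap B_+$ and $S\cap B_-$. Since $h\meg 0$ on $B_+$ and $S\cap B_+\subseteq B_+$, the first contribution lies in $[0,\int_{B_+} h\, d\pp]$; since $B_+\in\calb$, the defining identity of conditional expectation gives $\int_{B_+} h\, d\pp=\int_{B_+} f\, d\pp$, and since $B_+\in\cals$ this in turn is bounded in absolute value by $\|f\|_{\cals}$. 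A symmetric argument on $S\cap B_-$ puts the second contribution in $[-\|f\|_{\cals},0]$, so $|\int_S h\, d\pp|\mik \|f\|_{\cals}$, as desired.

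For part (c), the first inequality is immediate because $\cals$ is a $\sigma$-algebra: each indicator $\mathbf{1}_S$ with $S\in\cals$ is $\cals$-measurable, so $\int_S f\, d\pp=\int_S \ave(f\,|\,\cals)\, d\pp$, and the right-hand side is bounded by $\|\ave(f\,|\,\cals)\|_{L_1}$. For the second inequality, I would write $g=\ave(f\,|\,\cals)$ and note that $\{g\meg 0\},\{g<0\}\in\cals$; decomposing $\|g\|_{L_1}=\int_{\{g\meg 0\}} g\, d\pp -\int_{\{g<0\}} g\, d\pp$ and then replacing $g$ by $f$ in each of the two integrals via the conditional-expectation identity gives $\|g\|_{L_1}\mik 2\|f\|_{\cals}$. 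None of these arguments is genuinely hard; the only subtle point is that in (b) the supremum defining $\|\ave(f\,|\,\calb)\|_{\cals}$ ranges over $\cals$ rather than $\calb$, which is precisely why the reduction through the $\calb$-measurable sign-level sets of $\ave(f\,|\,\calb)$ is essential.
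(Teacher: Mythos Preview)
Your proof is correct and follows essentially the same approach as the paper's: both arguments dispatch (a) trivially, handle (b) by splitting $\int_S \ave(f\,|\,\calb)\,d\pp$ via the $\calb$-measurable sign-level sets $\{\ave(f\,|\,\calb)\meg 0\}$ and its complement (which lie in $\cals$ by hypothesis) and then pass to $f$ using the conditional-expectation identity, and treat (c) by the same sign decomposition applied to $\ave(f\,|\,\cals)$. The only difference is presentational---the paper phrases the bound in (b) as a $\max$ of the two pieces rather than as a sum of terms with opposite signs---but the underlying idea is identical.
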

\begin{proof}
Part (a) is straightforward. For part (b), fix a $\sigma$-algebra $\mathcal{B}$ on $\Omega$ with $\mathcal{B}\subseteq \cals$
and set $P=\{\omega\in\Omega: \ave(f\, | \, \mathcal{B})(\omega)\meg 0\}$ and $N=\Omega\setminus P$. Notice that 
$P,N\in\mathcal{B}\subseteq\cals$. Hence, for every $S\in\cals$ we have 
\begin{eqnarray} \label{e2.9}
\big|\int_S \ave(f\, | \, \mathcal{B}) \, d\mathbb{P}\big| & \mik & \max\Big\{ \int_{P\cap S} \ave(f\, | \, \mathcal{B})\, d\mathbb{P},
-\int_{N\cap S} \ave(f\, | \, \mathcal{B})\, d\mathbb{P}\Big\} \\
& \mik & \max\Big\{ \int_{P} \ave(f\, | \, \mathcal{B})\, d\mathbb{P}, -\int_{N} \ave(f\, | \, \mathcal{B})\, d\mathbb{P}\Big\} \nonumber \\
& = & \max\Big\{ \int_{P} f\, d\mathbb{P}, -\int_{N} f\, d\mathbb{P}\Big\} \mik \|f\|_{\cals} \nonumber
\end{eqnarray}
which yields that $\|\ave(f\, | \, \mathcal{B})\|_{\cals}\mik \|f\|_{\cals}$. 

Finally, assume that $\cals$ is a $\sigma$-algebra and notice that $\int_S f\, d\pp=\int_S \ave(f\, | \, \cals)\, d\pp$ 
for every $S\in\cals$. In particular, we have $\|f\|_{\cals}\mik \|\ave(f\, |\, \cals)\|_{L_1}$. Also let, as above, 
$P=\{\omega\in\Omega: \ave(f\, | \, \cals)(\omega)\meg 0\}$ and $N=\Omega\setminus P$. Since $P,N\in\cals$ we obtain that
\begin{equation} \label{e2.10}
\|\ave(f\, | \, \cals)\|_{L_1} \mik 2\cdot \max\Big\{ \int_P \ave(f\, | \, \cals)\, d\mathbb{P},
-\int_N \ave(f\, | \, \cals)\, d\mathbb{P}\Big\} \mik 2\|f\|_{\cals}
\end{equation}
and the proof is completed.
\end{proof}
We close this section by presenting some examples of $k$-semirings which are relevant from a combinatorial perspective. 
In the first example the underlying space is the Cartesian product of a finite sequence of nonempty finite sets. 
The corresponding semirings are related to the development of Szemer\'{e}di's regularity method for hypergraphs.
\begin{examp} \label{ex1}
Let $d\in\nn$ with $d\meg 2$ and $V_1,\dots,V_d$ nonempty finite sets. We view the Cartesian product $V_1\times\cdots \times V_d$
as a discrete probability space equipped with the uniform probability measure. For every nonempty subset $F$ of $[d]$ let
$\pi_F\colon\prod_{i\in [d]} V_i\to \prod_{i\in F} V_i$ be the natural projection and set
\begin{equation} \label{e2.11}
\cala_F=\Big\{ \pi^{-1}_F(A): A\subseteq \prod_{i\in F} V_i \Big\}.
\end{equation}
The family $\cala_F$ is an algebra of subsets of $V_1\times\cdots \times V_d$ and consists of those sets which
depend only on the coordinates determined by $F$. 

More generally, let $\calf$ be a family of nonempty subsets of $[d]$. Set $k=|\calf|$ and observe that, by Corollary \ref{c2.4}, 
we may associate with the family $\calf$ a $k$-semiring $\cals_{\calf}$ on $V_1\times\cdots\times V_d$ defined by the rule
\begin{equation} \label{e2.12}
S\in\cals_{\calf} \Leftrightarrow S=\bigcap_{F\in\calf} A_F \text{ where } A_F\in \cala_F \text{ for every } F\in\calf.
\end{equation}
Notice that if the family $\calf$ satisfies  $[d]\notin \calf$ and $\cup\calf=[d]$, then it gives rise to a non-trivial semiring
whose corresponding uniformity norm is a genuine norm.

It turns out that there is a minimal non-trivial semiring $\cals_{\min}$ one can obtain in this way. It corresponds to the family
$\calf_{\min}={[d]\choose 1}$ and is particularly easy to grasp since it consists of all rectangles of $V_1\times\cdots\times V_d$.
The $\cals_{\min}$-uniformity norm is known as the \emph{cut norm} and was introduced by Frieze and Kannan \cite{FrK}.

At the other extreme, this construction also yields a maximal non-trivial semiring $\cals_{\max}$ on $V_1\times\cdots\times V_d$.
It corresponds to the family $\calf_{\max}={[d]\choose d-1}$ and consists of those subsets of the product which can be written as
$A_1\cap\cdots \cap  A_d$ where for every $i\in [d]$ the set $A_i$ does not depend on the $i$-th coordinate. The $\cals_{\max}$-uniformity
norm is known as the \emph{Gowers box norm} and was introduced by Gowers \cite{Go1,Go2}.
\end{examp}
In the second example the underlying space is of the form $\bo\times\bo$ where $\bo$ is the sample space of a probability space
$(\bo,\calf,\pp)$. The corresponding semirings are related to the theory of convergence of graphs (see, e.g., \cite{BCLSV,L}).
\begin{examp} \label{ex2}
Let $(\bo,\calf,\pp)$ be a probability space and define
\begin{equation} \label{e2.13}
\cals_{\square}=\big\{ S\times T: S,T\in\calf\big\}.
\end{equation}
That is, $\cals_{\square}$ is the family of all measurable rectangles of $\bo\times\bo$. By Corollary \ref{c2.4}, we see that
$\cals_{\square}$ is a $2$-semiring on $\bo\times\bo$. The $\cals_{\square}$-uniformity norm is also referred to as the
\textit{cut norm} and is usually denoted by $\|\cdot\|_{\square}$. In particular, for every integrable random variable
$f\colon \bo\times\bo\to\rr$ we have
\begin{equation} \label{e2.14}
\|f\|_{\square} = \sup\Big\{ \big| \int_{S\times T} f\, d\pp\big|: S,T\in\calf \Big\}.
\end{equation}
There is another natural semiring in this context which was introduced by Bollob\'{a}s and Nikiforov \cite{BN} and can 
be considered as the ``symmetric" version of $\cals_{\square}$. Specifically, let
\begin{equation} \label{e2.15}
\Sigma_{\square}=\big\{ S\times T: S,T\in\calf \text{ and either } S=T \text{ or } S\cap T=\emptyset\big\}
\end{equation} 
and observe that $\Sigma_{\square}$ is a $4$-semiring which is contained, of course, in $\cals_{\square}$.
On the other hand, note that the family $\cals_{\square}$ is not much larger than $\Sigma_{\square}$ since every element of 
$\cals_{\square}$ can be written as the disjoint union of at most $4$ elements of $\Sigma_{\square}$. Therefore, 
for every integrable random variable $f\colon\bo\times\bo\to\rr$ we have
\begin{equation} \label{e2.16}
\|f\|_{\Sigma_{\square}} \mik \|f\|_{\square} \mik 4 \|f\|_{\Sigma_{\square}}.
\end{equation}
\end{examp}
In the last example the underlying space is the hypercube
\begin{equation} \label{e2.17}
A^n=\big\{ (a_0,\dots,a_{n-1}): a_0,\dots,a_{n-1}\in A\big\}
\end{equation}
where $n$ is a positive integer and $A$ is a finite alphabet (i.e., a finite set) with at least two letters. 
The building blocks of the corresponding semirings were introduced by Shelah \cite{Sh} in his work on the Hales--Jewett 
numbers, and are essential tools in all known combinatorial proofs of the density Hales--Jewett theorem (see \cite{DKT1,P,Tao3}).
\begin{examp} \label{ex3}
Let $n$ be a positive integer and $A$ a finite alphabet with $|A|\meg 2$. As in Example \ref{ex1}, we view the hypercube $A^n$ 
as a discrete probability space equipped with the uniform probability measure. 

Now let $a,b\in A$ with $a\neq b$. Also let $z, y\in A^n$ and write $z=(z_0,\dots,z_{n-1})$ and $y=(y_0,\dots,y_{n-1})$.
We say that $z$ and $y$ are \textit{$(a,b)$-equivalent} provided that for every $i\in\{0,\dots,n-1\}$
and every $\gamma\in A\setminus\{a,b\}$ we have
\begin{equation} \label{e2.18}
z_i=\gamma \ \text{ if and only if } \ y_i=\gamma.
\end{equation}
In other words, $z$ and $y$ are $(a,b)$-equivalent if they possibly differ only in the coordinates taking values in $\{a,b\}$.
Clearly, the notion of $(a,b)$-equivalence defines an equivalence relation on $A^n$. The sets which are invariant under this equivalence 
relation are called \textit{$(a,b)$-insensitive}. That is, a subset $X$ of $A^n$ is $(a,b)$-insensitive provided that for every $z\in X$
and every $y\in A^n$ if $z$ and $y$ are $(a,b)$-equivalent, then $y\in X$. We set
\begin{equation} \label{e2.19}
\cala_{\{a,b\}} = \{ X\subseteq A^n:  X \text{ is $(a,b)$-insensitive} \}.
\end{equation}
It follows readily from the above definitions that the family $\cala_{\{a,b\}}$ is an algebra of subsets of $A^n$. 

The algebras $\big\{\cala_{\{a,b\}}: \{a,b\}\in {A\choose 2}\big\}$ can then be used to construct various $k\text{-semirings}$
on $A^n$. Specifically, let $\calf\subseteq {A\choose 2}$ and set $k=|\calf|$. By Corollary \ref{c2.4}, we see that the family 
constructed from the algebras $\{\cala_{\{a,b\}}: \{a,b\}\in\calf\}$ via formula \eqref{e2.7} is a $k$-semiring on $A^n$.

The maximal semiring obtained in this way corresponds to the family ${A\choose 2}$. We shall denote it by $\cals(A^n)$. In
particular, we have that $\cals(A^n)$ is a $K$-semiring on $A^n$ where $K=|A|(|A|-1)2^{-1}$. Note that $K$ is independent of $n$.
Also observe that if $|A|\meg 3$, then the $\cals(A^n)$-uniformity norm is actually a norm.
\end{examp}

%----------------------The main result------------------------%

\section{The main result}

\numberwithin{equation}{section}

First we introduce some terminology and some pieces of notation. We say that a function $F\colon\nn\to\rr$ is a
\textit{growth function} provided that: (i) $F$ is increasing, and (ii)~$F(n)\meg n+1$ for every $n\in\nn$. Moreover, 
for every nonempty set $\Omega$ and every finite partition $\calp$ of $\Omega$ by $\cala_{\calp}$ we shall denote the
$\sigma\text{-algebra}$ on $\Omega$ generated~by~$\calp$. Clearly, the $\sigma$-algebra $\cala_{\calp}$ is finite and
its nonempty atoms are precisely the members of $\calp$. Also note if $\calq$ and $\calp$ are two finite partitions of 
$\Omega$, then $\calq$ is a refinement of $\calp$ if and only if $\cala_{\calq}\supseteq \cala_{\calp}$.

Now for every pair $k,\ell$ of positive integers, every $0<\sigma\mik 1$, every $1<p\mik 2$ and every growth function 
$F\colon\nn\to\rr$ we define $h\colon \nn\to\nn$ recursively by the rule
\begin{equation} \label{e3.1}
\begin{cases}
h(0)=0, \\
h(i+1)=h(i)+\lceil \sigma^2\, \ell\, F^{(h(i)+2)}\!(0)^2 (p-1)^{-1}\rceil
\end{cases}
\end{equation}
and we set
\begin{equation} \label{e3.2}
R= h\big(\lceil \ell\, \sigma^{-2}(p-1)^{-1}\rceil -1 \big).
\end{equation}
Finally, we define 
\begin{equation} \label{e3.3}
\mathrm{Reg}(k,\ell,\sigma,p,F)= F^{(R)}(0).
\end{equation}
Note that if $F\colon\nn\to\nn$ is a primitive recursive growth function which belongs to the class $\mathcal{E}^n$
of Grzegorczyk's hierarchy for some $n\in\nn$ (see, e.g., \cite{Ros}), then the numbers $\mathrm{Reg}(k,\ell,\sigma,p,F)$ 
are controlled by a primitive recursive function belonging to the class $\mathcal{E}^m$ where $m=\max\{4, n+2\}$.

We are now ready to state the main result of this paper. 
\begin{thm} \label{t3.1}
Let $k, \ell$ be positive integers, $0<\sigma\mik 1$, $1<p\mik 2$ and $F\colon\nn\to \rr$ a growth function. Also let 
$(\bo,\calf,\pp)$ be a probability space and $(\cals_i)$ an increasing sequence of $k$-semirings on $\Omega$
with $\cals_i\subseteq \calf$ for every $i\in\nn$. Finally, let $\calc$ be a family in $L_p(\bo,\calf,\pp)$ such that 
$\|f\|_{L_p}\mik 1$ for every $f\in\calc$ and with $|\calc|=\ell$. Then there exist 
\begin{enumerate}
\item[(a)] a natural number $N$ with $N\mik \mathrm{Reg}(k,\ell,\sigma,p,F)$,
\item[(b)] a partition $\calp$ of $\Omega$ with $\calp\subseteq \cals_N$ and $|\calp|\mik (k+1)^N$, and
\item[(c)] a finite refinement $\calq$ of $\calp$ with $\calq\subseteq\cals_i$ for some $i\meg N$
\end{enumerate}
such that for every $f\in\calc$, writing $f=f_{\mathrm{str}}+ f_{\mathrm{err}}+ f_{\mathrm{unf}}$ where
\begin{equation} \label{e3.4}
f_{\mathrm{str}}=\ave(f \, | \, \cala_{\calp}), \ \
f_{\mathrm{err}}=\ave(f \, | \, \cala_{\calq})-\ave(f \, | \, \cala_{\calp}) \ \text{ and } \
f_{\mathrm{unf}}=f-\ave(f \, | \, \cala_{\calq}), 
\end{equation}
we have the estimates 
\begin{equation} \label{e3.5}
\| f_{\mathrm{err}}\|_{L_p}\mik \sigma \ \text{ and } \ \|f_{\mathrm{unf}}\|_{\cals_i}\mik \frac{1}{F(i)}
\end{equation}
for every $i\in\{0,\dots,F(N)\}$.
\end{thm}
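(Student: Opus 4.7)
The plan is a two-scale greedy energy increment argument: an \emph{outer} loop refines a partition $\calp$ so that the error term $f_{\mathrm{err}}=\ave(f|\cala_\calq)-\ave(f|\cala_\calp)$ is small in $L_p$ for every $f\in\calc$, while an \emph{inner} loop further refines $\calp$ to a partition $\calq$ so that the residual $f_{\mathrm{unf}}=f-\ave(f|\cala_\calq)$ is small in each uniformity norm $\|\cdot\|_{\cals_i}$. The structural ingredients are the stability axioms (P1)--(P3) of Definition \ref{d2.1}, which guarantee that splitting a partition lying in $\cals_i$ by a set from $\cals_j$ yields a refinement inside $\cals_{\max(i,j)}$ whose number of atoms grows by a factor of at most $k+1$; the comparison estimates of Lemma \ref{l2.5}; and, crucially, the Ricard--Xu martingale inequality \eqref{e1.3}, which converts $L_p$-energy increments into an a priori bound on the number of refinement steps.

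Initialize $\calp_0=\{\emptyset,\Omega\}\subseteq\cals_0$ and $N_0=0$. At the start of outer stage $m$ maintain the invariants $\calp_m\subseteq\cals_{N_m}$ and $|\calp_m|\mik (k+1)^{N_m}$, and run the following inner loop: starting with $\calq=\calp_m$, as long as there exist $f\in\calc$, $i\in\{0,\dots,F(N_m)\}$, and $S\in\cals_i$ satisfying $|\int_S(f-\ave(f|\cala_\calq))\,d\pp|>1/F(i)$, refine $\calq$ by $S$ using property (P3). Since $S$ belongs to $\cala_{\calq'}$ after the refinement,
\begin{equation*}
\int_S \big(\ave(f|\cala_{\calq'})-\ave(f|\cala_\calq)\big)\, d\pp = \int_S \big(f-\ave(f|\cala_\calq)\big)\, d\pp,
\end{equation*}
so each inner refinement contributes an $L_p$-increment exceeding $1/F(i)\meg 1/F^{(2)}(N_m)$ to the martingale $(\ave(f|\cala_\calq))$ for the chosen $f$, using $\|\cdot\|_{L_p}\meg\|\cdot\|_{L_1}$. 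Applying \eqref{e1.3} to each $f\in\calc$ and summing over $\calc$ bounds the number $r_m$ of inner refinements by $\lceil\ell F^{(2)}(N_m)^2(p-1)^{-1}\rceil$; the resulting $\calq_m$ lies in $\cals_{F(N_m)}$ and satisfies $\|f-\ave(f|\cala_{\calq_m})\|_{\cals_i}\mik 1/F(i)$ for every $f\in\calc$ and every $i\in\{0,\dots,F(N_m)\}$.

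Now inspect the outer condition. If $\|\ave(f|\cala_{\calq_m})-\ave(f|\cala_{\calp_m})\|_{L_p}\mik \sigma$ for every $f\in\calc$, output $\calp=\calp_m$, $\calq=\calq_m$, $N=N_m$ together with the index $i=F(N_m)\meg N$; the three estimates of \eqref{e3.5} follow directly from the stopping criteria. Otherwise set $\calp_{m+1}=\calq_m$ and $N_{m+1}=\max(N_m+r_m, F(N_m))$, preserving both invariants. A second application of \eqref{e1.3}---this time to the martingale $(\ave(f|\cala_{\calp_m}))_m$, each of whose outer steps registers an $L_p$-jump exceeding $\sigma$ for at least one $f\in\calc$---summed over $\calc$ bounds the total number of outer iterations by $\lceil\ell\sigma^{-2}(p-1)^{-1}\rceil-1$.

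The main obstacle is the careful bookkeeping of $N_m$, so that the final value $N=N_M$ is indeed controlled by $\mathrm{Reg}(k,\ell,\sigma,p,F)$. Because the uniformity condition at stage $m$ is demanded throughout $\{0,\dots,F(N_m)\}$, the inner refinements unavoidably use sets from $\cals_{F(N_m)}$, so the recursion $N_{m+1}\mik\max(N_m+r_m,F(N_m))$ combined with the inner-loop bound on $r_m$ leads to the recursion \eqref{e3.1} defining $h$; iterating it $M$ times yields $N\mik F^{(R)}(0)=\mathrm{Reg}(k,\ell,\sigma,p,F)$. The partition size bound $|\calp|\mik (k+1)^N$ is then automatic from the maintained invariant, and the decomposition \eqref{e3.4} and the estimates \eqref{e3.5} follow directly from the stopping criteria of the inner and outer loops.
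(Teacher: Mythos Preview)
Your two-scale greedy/energy-increment strategy is the same as the paper's (Lemmas \ref{l3.3}--\ref{l3.5}), but the bookkeeping as written does not deliver the specific bound $\mathrm{Reg}(k,\ell,\sigma,p,F)$ defined in \eqref{e3.1}--\eqref{e3.3}, for two reasons.

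First, your inner loop runs all the way until the uniformity condition is met and only \emph{then} tests whether $\|\ave(f\,|\,\cala_{\calq_m})-\ave(f\,|\,\cala_{\calp_m})\|_{L_p}>\sigma$. The paper's inner loop (Lemma \ref{l3.4}) instead allows an early exit: option (a) fires as soon as the $L_p$-error exceeds $\sigma$, and only under hypothesis (H1) does one verify option (b). This early exit is precisely what produces the factor $\sigma^2$ in the inner-loop bound $n=\lceil \sigma^2\ell\,\delta^{-2}(p-1)^{-1}\rceil$; your bound $r_m\mik\lceil \ell\,F^{(2)}(N_m)^2(p-1)^{-1}\rceil$ carries no $\sigma^2$ and therefore does not match the increment in \eqref{e3.1}.

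Second, your recursion $N_{m+1}\mik\max\big(N_m+r_m,\,F(N_m)\big)$ mixes \emph{addition} (counting refinements) with \emph{application of $F$} (from the uniformity range $\{0,\dots,F(N_m)\}$), and this does not reduce to \eqref{e3.1}, which is a pure additive recursion on an iterate count $h$. The paper resolves this by re-indexing: it sets $\Sigma_i=\cals_{F^{(i)}(0)}$ and $H(n)=F^{(n+2)}(0)$, applies Lemma \ref{l3.5} to $(\Sigma_i)$ and $H$, and afterward reads off $N=F^{(n_j)}(0)$. In these coordinates each inner step raises the $\Sigma$-index by exactly one, the uniformity check at stage $j$ involves only the single semiring $\Sigma_{J+1}\supseteq\cals_{F(N)}$ with the single threshold $1/H(n_j)=1/F(F(N))$ (which a posteriori gives $\|f_{\mathrm{unf}}\|_{\cals_i}\mik 1/F(i)$ for all $i\mik F(N)$ by monotonicity), and the recursion on $n_j$ is exactly \eqref{e3.1}. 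Without this re-indexing your claim that your recursion ``leads to \eqref{e3.1}'' is not justified. Your argument is conceptually sound and can be salvaged by incorporating both fixes; as written it establishes only a weaker quantitative conclusion than the theorem asserts.
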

The case ``$p=2$" in Theorem \ref{t3.1} is essentially due to Tao \cite{Tao1,Tao2,Tao3}. His approach, however,
is somewhat different since he works with $\sigma\text{-algebras}$ instead of $k$-semirings.

The increasing sequence $(\cals_i)$ of $k$-semirings can be thought of as the higher-complexity analogue of the 
classical concept of a filtration in the theory of martingales. In fact, this is more than an analogy since, by 
applying Theorem \ref{t3.1} to appropriately selected filtrations, one is able to recover the fact that, for any 
$1<p\mik 2$, every $L_p$ bounded martingale is $L_p$ convergent. We discuss these issues in Appendix B.

We also note that the idea to obtain ``uniformity" estimates with respect to an arbitrary growth function has been 
considered by several authors. This particular feature is essential when one wishes to iterate this structural decomposition 
(this is the case, for instance, in the context of hypergraphs---see, e.g., \cite{Tao1}). On the other hand, the need
to ``regularize"\!, simultaneously, a finite family of random variables appears frequently in extremal combinatorics and 
related parts of Ramsey theory (see, e.g., \cite{DKT2}). Nevertheless, in most applications (including the applications 
presented in Section 4), one deals with a single random variable and with a single semiring. Hence, we will isolate
this special case in order to facilitate future references.

To this end, for every positive integer $k$, every $0<\sigma\mik1$, every $1<p\mik 2$ and every growth function
$F\colon\nn\to\rr$ we set
\begin{equation} \label{e3.6}
\mathrm{Reg}'(k,\sigma,p,F)=(k+1)^{\mathrm{Reg}(k,1,\sigma,p,F')}
\end{equation}
where $F'\colon \nn\to\rr$ is the growth function defined by the rule $F'(n)=F\big((k+1)^n\big)$ for every $n\in\nn$.
We have the following corollary.
\begin{cor} \label{c3.2}
Let $k$ be a positive integer, $0<\sigma\mik 1$, $1<p\mik 2$ and $F\colon\nn\to \rr$ a growth function. Also let 
$(\bo,\calf,\pp)$ be a probability space and let $\cals$ be a $k\text{-semiring}$ on $\Omega$ with $\cals\subseteq\calf$. 
Finally, let $f\in L_p(\bo,\calf,\pp)$ with $\|f\|_{L_p}\mik 1$. Then there exist
\begin{enumerate}
\item[(a)] a positive integer $M$ with $M\mik \mathrm{Reg}'(k,\sigma,p,F)$,
\item[(b)] a partition $\calp$ of $\Omega$ with $\calp\subseteq \cals$ and $|\calp|= M$, and 
\item[(c)] a finite refinement $\calq$ of $\calp$ with $\calq\subseteq\cals$
\end{enumerate}
such that, writing $f=f_{\mathrm{str}}+ f_{\mathrm{err}}+ f_{\mathrm{unf}}$ where
\begin{equation} \label{e3.7}
f_{\mathrm{str}}=\ave(f \, | \, \cala_{\calp}), \ \
f_{\mathrm{err}}=\ave(f \, | \, \cala_{\calq})-\ave(f \, | \, \cala_{\calp}) \ \text{ and } \
f_{\mathrm{unf}}=f-\ave(f \, | \, \cala_{\calq}), 
\end{equation}
we have the estimates
\begin{equation} \label{e3.8}
\| f_{\mathrm{err}}\|_{L_p}\mik \sigma \ \text{ and } \ \|f_{\mathrm{unf}}\|_{\cals}\mik \frac{1}{F(M)}.
\end{equation}
\end{cor}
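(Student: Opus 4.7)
The plan is to deduce Corollary \ref{c3.2} directly from Theorem \ref{t3.1} by packaging the single random variable and single semiring into the hypotheses of the theorem, and by reparametrizing the growth function so as to absorb the bound $|\calp|\mik (k+1)^N$ into the denominator of the uniformity estimate.

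First I would define a new growth function $F'\colon\nn\to\rr$ by $F'(n)=F\big((k+1)^n\big)$. Since $F$ is increasing and $n\mapsto (k+1)^n$ is strictly increasing (as $k\meg 1$), $F'$ is increasing; moreover $F'(n)=F\big((k+1)^n\big)\meg (k+1)^n+1\meg n+1$, so $F'$ is indeed a growth function. Next, I would set $\cals_i=\cals$ for every $i\in\nn$; this constant sequence is trivially an increasing sequence of $k$-semirings contained in $\calf$. I would then apply Theorem \ref{t3.1} with the singleton family $\calc=\{f\}$ (so $\ell=1$), the sequence $(\cals_i)$, and the growth function $F'$, using the same $k$, $\sigma$ and $p$.

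The theorem yields a natural number $N\mik\mathrm{Reg}(k,1,\sigma,p,F')$, a partition $\calp\subseteq \cals_N=\cals$ with $|\calp|\mik (k+1)^N$, and a refinement $\calq\subseteq \cals_i=\cals$ for some $i\meg N$, such that $\|f_{\mathrm{err}}\|_{L_p}\mik\sigma$ and $\|f_{\mathrm{unf}}\|_{\cals_j}\mik 1/F'(j)$ for every $j\in\{0,\dots,F'(N)\}$. I would then set $M=|\calp|$, so that
\[
M\mik (k+1)^N \mik (k+1)^{\mathrm{Reg}(k,1,\sigma,p,F')}=\mathrm{Reg}'(k,\sigma,p,F),
\]
which gives part (a); parts (b) and (c) follow since $\cals_i=\cals$ throughout.

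For the uniformity estimate, I would specialize the conclusion of Theorem \ref{t3.1} to $j=N$, which lies in $\{0,\dots,F'(N)\}$ because $F'(N)\meg N+1$. This gives
\[
\|f_{\mathrm{unf}}\|_{\cals}\mik \frac{1}{F'(N)}=\frac{1}{F\big((k+1)^N\big)}\mik \frac{1}{F(M)},
\]
using that $F$ is increasing and $M\mik (k+1)^N$. There is no substantive obstacle here; the whole argument is a bookkeeping exercise whose only delicate point is ensuring that $F'$ is a genuine growth function so that Theorem \ref{t3.1} is applicable with the intended quantitative gain $F'(N)=F(M_{\max})$.
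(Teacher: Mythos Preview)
Your proposal is correct and follows exactly the approach implicit in the paper: the paper does not give a separate proof of Corollary \ref{c3.2}, but the definitions of $F'(n)=F\big((k+1)^n\big)$ and $\mathrm{Reg}'(k,\sigma,p,F)=(k+1)^{\mathrm{Reg}(k,1,\sigma,p,F')}$ placed immediately before the statement indicate precisely the reduction you carry out---apply Theorem \ref{t3.1} with $\ell=1$, the constant sequence $\cals_i=\cals$, and the growth function $F'$, then set $M=|\calp|$ and use $M\mik (k+1)^N$ together with the monotonicity of $F$ to pass from $1/F'(N)$ to $1/F(M)$.
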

Finally, we notice that the assumption that $1< p\mik 2$ in the above results is not restrictive, since the case of random 
variables in $L_p$ for $p>2$ is reduced to the case $p=2$. On the other hand, we remark that Theorem \ref{t3.1} does
not hold true for $p=1$ (see Appendix B). Thus, the range of $p$ in Theorem \ref{t3.1} is optimal.

%---------------Proof of Theorem \ref{t3.1}-------------%

\subsection{Proof of Theorem \ref{t3.1}}

We start with the following lemma.
\begin{lem} \label{l3.3}
Let $k$ be a positive integer, $p\meg 1$ and $0<\delta\mik 1$. Also let $(\bo,\calf,\pp)$ be a probability
space, $\Sigma$ a $k$-semiring on $\Omega$ with $\Sigma\subseteq \calf$, $\calq$ a finite partition of $\Omega$ with 
$\calq\subseteq\Sigma$ and $f\in L_p(\bo,\calf,\pp)$ with $\|f-\ave(f \, | \, \cala_{\calq})\|_{\Sigma}> \delta$. 
Then there exists a refinement $\calr$ of $\calq$ with $\calr\subseteq \Sigma$ and $|\calr|\mik |\calq| (k+1)$, and 
such that $\|\ave(f \, | \, \cala_{\calr})-\ave(f \, | \, \cala_{\calq})\|_{L_p} >\delta$.
\end{lem}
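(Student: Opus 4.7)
The plan is to exploit the assumption that the $\Sigma$-uniformity norm is large by picking a witnessing set $S \in \Sigma$, and then to chop every atom of $\calq$ along $S$ using the $k$-semiring axioms. This yields a refinement which ``sees'' $S$, and so the new conditional expectation recovers the whole mass of $f$ over $S$; from there a Hölder estimate converts the integral bound into an $L_p$ bound.

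More concretely, by the definition of $\|\cdot\|_{\Sigma}$ I would first pick $S\in\Sigma$ such that
\[
\Bigl|\int_S \bigl(f-\ave(f \, | \, \cala_{\calq})\bigr)\, d\pp\Bigr| > \delta.
\]
For each atom $Q\in\calq$, property (P2) gives $Q\cap S\in\Sigma$, and property (P3) applied to $Q$ and $Q\cap S$ (both in $\Sigma$) provides pairwise disjoint $R^Q_1,\dots,R^Q_{\ell_Q}\in\Sigma$ with $\ell_Q\in[k]$ and $Q\setminus S = R^Q_1\cup\cdots\cup R^Q_{\ell_Q}$. Setting
\[
\calr = \bigcup_{Q\in\calq}\Bigl\{Q\cap S,\, R^Q_1,\dots,R^Q_{\ell_Q}\Bigr\}\setminus\{\emptyset\},
\]
one verifies immediately that $\calr$ is a finite partition of $\Omega$ refining $\calq$, that $\calr\subseteq\Sigma$, and that $|\calr|\mik |\calq|(k+1)$.

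Next I would observe the crucial point: $S = \bigcup_{Q\in\calq}(Q\cap S)$ is a union of atoms of $\calr$, hence $S\in\cala_{\calr}$. Consequently
\[
\int_S \ave(f\,|\,\cala_{\calr})\, d\pp = \int_S f\, d\pp,
\]
and combining with the trivial identity $\int_S \ave(f\,|\,\cala_{\calq})\, d\pp = \int_S \ave(f\,|\,\cala_{\calq})\, d\pp$ we get
\[
\Bigl|\int_S \bigl(\ave(f\,|\,\cala_{\calr})-\ave(f\,|\,\cala_{\calq})\bigr)\, d\pp\Bigr| = \Bigl|\int_S \bigl(f-\ave(f\,|\,\cala_{\calq})\bigr)\, d\pp\Bigr| > \delta.
\]
Finally, Hölder's inequality with conjugate exponent $p'$ yields
\[
\Bigl|\int_S \bigl(\ave(f\,|\,\cala_{\calr})-\ave(f\,|\,\cala_{\calq})\bigr)\, d\pp\Bigr| \mik \|\mathbf{1}_S\|_{L_{p'}}\cdot \|\ave(f\,|\,\cala_{\calr})-\ave(f\,|\,\cala_{\calq})\|_{L_p} \mik \|\ave(f\,|\,\cala_{\calr})-\ave(f\,|\,\cala_{\calq})\|_{L_p},
\]
since $\pp(S)\mik 1$. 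The desired estimate follows.

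There is no deep obstacle here; the substantive step is the semiring engineering, i.e., using (P2) together with (P3) to simultaneously split every atom of $\calq$ along the single witness $S$ while keeping the total number of new pieces bounded by $|\calq|(k+1)$. Everything else is an essentially one-line measure-theoretic computation followed by Hölder.
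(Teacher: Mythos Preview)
Your proof is correct and follows essentially the same route as the paper's: pick a witnessing set $S\in\Sigma$, refine $\calq$ along $S$ via the $k$-semiring axioms so that $S\in\cala_{\calr}$, and then pass from the integral bound to the $L_p$ bound. The only cosmetic differences are that you spell out the construction of $\calr$ explicitly (the paper merely asserts its existence) and invoke H\"older where the paper invokes the monotonicity $\|\cdot\|_{L_1}\mik\|\cdot\|_{L_p}$---the content is identical.
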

\begin{proof}
By our assumptions, there exists $S\in\Sigma$ such that
\begin{equation} \label{e3.9}
\big| \int_S \big( f-\ave(f \, | \, \cala_{\calq}) \big)\, d\pp \big|> \delta.
\end{equation}
Since $\Sigma$ is a $k$-semiring on $\Omega$, there exists a refinement $\calr$ of $\calq$ such that: (i) $\calr\subseteq\Sigma$,
(ii) $|\calr|\mik |\calq| (k+1)$, and (iii) $S\in\cala_{\calr}$. It follows, in particular, that
\begin{equation} \label{e3.10}
\int_S \ave(f \, | \, \cala_{\calr})\, d\pp=\int_S f\, d\pp.
\end{equation}
Hence, by \eqref{e3.9} and the monotonicity of the $L_p$ norms, we obtain that
\begin{eqnarray} \label{e3.11}
\delta & < & \big| \int_S \big( \ave(f \, | \, \cala_{\calr})-\ave(f \, | \, \cala_{\calq}) \big)\, d\pp \big| \\
& \mik & \|\ave(f \, | \, \cala_{\calr})-\ave(f \, | \, \cala_{\calq})\|_{L_1} 
\mik \|\ave(f \, | \, \cala_{\calr})-\ave(f \, | \, \cala_{\calq})\|_{L_p} \nonumber
\end{eqnarray}
and the proof is completed.
\end{proof}
We proceed with the following lemma.
\begin{lem} \label{l3.4}
Let $k, \ell$ be positive integers, $0<\delta,\sigma \mik 1$ and $1<p\mik 2$, and set
\begin{equation} \label{e3.12}
n=\Big\lceil \frac{\sigma^2 \ell}{\delta^2 (p-1)}\Big\rceil.
\end{equation}
Also let $(\bo,\calf,\pp)$ be a probability space and let $(\Sigma_i)$ be an increasing sequence of $k\text{-semirings}$
on $\Omega$ with $\Sigma_i\subseteq \calf$ for every $i\in\nn$. Finally, let $m\in\nn$ and $\calp$ a partition of $\bo$ 
with $\calp\subseteq\Sigma_m$ and $|\calp|\mik (k+1)^m$. Then for every family $\calc$ in $L_p(\bo,\calf,\pp)$ with $|\calc|=\ell$
there exist $j\in\{m,\dots,m+n\}$ and a refinement $\calq$ of $\calp$ with $\calq\subseteq\Sigma_j$ and $|\calq|\mik (k+1)^j$,
and such that either
\begin{enumerate}
\item[(a)] $\|\ave(f \, | \, \cala_{\calq})-\ave(f \, | \, \cala_{\calp})\|_{L_p}>\sigma$ for some $f\in\calc$, or
\item[(b)] $\|\ave(f \, | \, \cala_{\calq})-\ave(f \, | \, \cala_{\calp})\|_{L_p}\mik \sigma$ and
$\|f-\ave(f \, | \, \cala_{\calq})\|_{\Sigma_{j+1}}\mik \delta$ for every $f\in\calc$.
\end{enumerate}
\end{lem}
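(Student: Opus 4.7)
The plan is to run a greedy refinement algorithm driven by Lemma~\ref{l3.3}, halting as soon as (a) or (b) is satisfied, and to use the Ricard--Xu inequality~\eqref{e1.3} to force termination within $n$ iterations.

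Concretely, I would set $\calq_0=\calp$ and iterate as follows. At stage $s\meg 0$, the partition $\calq_s$ will refine $\calp$ and satisfy $\calq_s\subseteq \Sigma_{m+s}$ and $|\calq_s|\mik (k+1)^{m+s}$. I check whether (a) or (b) holds with $\calq=\calq_s$ and $j=m+s$; if so, I output $\calq_s$. Otherwise the failure of (b) furnishes some $f_{s+1}\in\calc$ with $\|f_{s+1}-\ave(f_{s+1}\,|\,\cala_{\calq_s})\|_{\Sigma_{m+s+1}}>\delta$, and Lemma~\ref{l3.3} applied with $\Sigma=\Sigma_{m+s+1}$ yields a refinement $\calq_{s+1}\subseteq \Sigma_{m+s+1}$ of $\calq_s$ with $|\calq_{s+1}|\mik (k+1)|\calq_s|\mik (k+1)^{m+s+1}$ and
\[
\|\ave(f_{s+1}\,|\,\cala_{\calq_{s+1}})-\ave(f_{s+1}\,|\,\cala_{\calq_s})\|_{L_p} > \delta.
\]
All the structural requirements in the conclusion are then automatic; only termination remains.

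The heart of the proof is to show that the algorithm halts by stage $n$. I would argue by contradiction and suppose that $\calq_0,\dots,\calq_n$ are all produced without halting. Then condition (a) fails at $\calq_n$, so $\|\ave(f\,|\,\cala_{\calq_n})-\ave(f\,|\,\cala_{\calp})\|_{L_p}\mik \sigma$ for every $f\in\calc$. Pigeonhole produces some $f^*\in\calc$ selected at least $\lceil n/\ell\rceil$ times among $f_1,\dots,f_n$. Setting $g_s=\ave(f^*\,|\,\cala_{\calq_s})-\ave(f^*\,|\,\cala_{\calp})$ defines a martingale adapted to $(\cala_{\calq_s})_{s=0}^n$ with $g_0=0$, $g_s=\ave(g_n\,|\,\cala_{\calq_s})$ and $\|g_n\|_{L_p}\mik \sigma$. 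Applying \eqref{e1.3} to $g_n$ yields
\[
\sum_{s=1}^n \|g_s-g_{s-1}\|_{L_p}^2 \mik \frac{\|g_n\|_{L_p}^2}{p-1} \mik \frac{\sigma^2}{p-1},
\]
whereas restricting the sum to the indices $s$ with $f_s=f^*$ and invoking the strict lower bound $\delta$ from each corresponding refinement step gives
\[
\sum_{s=1}^n \|g_s-g_{s-1}\|_{L_p}^2 > \Big\lceil \frac{n}{\ell}\Big\rceil \delta^2 \meg \frac{n\delta^2}{\ell} \meg \frac{\sigma^2}{p-1}
\]
by the choice of $n$. This is the desired contradiction.

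The only substantive analytic input is the Ricard--Xu inequality; the main obstacle is to organize the greedy algorithm so that a single $f^*\in\calc$---selected by pigeonhole after the fact---accounts for enough of the $L_p$ increments to collide with the Ricard--Xu bound, and to verify that the strict $>\delta$ produced by Lemma~\ref{l3.3} survives the ceiling in the definition of $n$ to yield a genuine contradiction.
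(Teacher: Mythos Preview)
Your proposal is correct and follows essentially the same approach as the paper: both run the greedy refinement driven by Lemma~\ref{l3.3}, use pigeonhole to isolate a single $f^*\in\calc$ contributing at least $n/\ell$ large increments, and derive a contradiction from the Ricard--Xu inequality (Proposition~\ref{pa.1}) applied to the resulting martingale. The only cosmetic difference is that the paper phrases the argument as ``assume (a) fails for every admissible $\calq$, then show (b) holds for some $\calq$ by contradiction,'' whereas you phrase it as ``run the algorithm and show it halts by stage $n$''; the underlying sequence of partitions and the final numerical contradiction are identical.
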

The case ``$p=2$" in Lemma \ref{l3.4} can be proved with an ``energy increment strategy" which ultimately depends upon the fact
that martingale difference sequences are orthogonal in $L_2$ (see, e.g., \cite[Theorem 2.11]{Tao2}). In the non-Hilbertian 
case (that is, when $1<p<2$) the geometry is more subtle and we will rely, instead, on Proposition \ref{pa.1}. The argument
can therefore be seen as the $L_p$-version of the ``energy increment strategy"\!. More applications of this method
are given in \cite{DKK,DKT3}.
\begin{proof}[Proof of Lemma \emph{\ref{l3.4}}]
Assume that the first part of the lemma is not satisfied. Note that this is equivalent to saying that
\begin{enumerate}
\item[(H1)] for every $j\in\{m,\dots,m+n\}$, every refinement $\calq$ of $\calp$ with $\calq\subseteq\Sigma_j$ and
$|\calq|\mik (k+1)^j$ and every $f\in\calc$ we have $\|\ave(f\, | \, \cala_{\calq})-\ave(f\, | \, \cala_{\calp})\|_{L_p}\mik\sigma$.
\end{enumerate}
We will use hypothesis (H1) to show that part (b) is satisfied. 

To this end we will argue by contradiction. Let $j\in\{m,\dots,m+n\}$ and let $\calq$ be a refinement of $\calp$ with 
$\calq\subseteq\Sigma_j$ and $|\calq|\mik (k+1)^j$. Observe that hypothesis (H1) and our assumption that part (b) does 
not hold true, imply that there exists $f\in\calc$ (possibly depending on the partition $\calq$) such that 
$\|f-\ave(f \, | \, \cala_{\calq})\|_{\Sigma_{j+1}}>\delta$. Since the sequence $(\Sigma_i)$ is increasing, Lemma \ref{l3.3}
can be applied to the $k$-semiring $\Sigma_{j+1}$, the partition $\calq$ and the random variable $f$. Hence, we obtain that 
\begin{enumerate}
\item[(H2)] for every $j\in\{m,\dots,m+n\}$ and every refinement $\calq$ of $\calp$ with $\calq\subseteq\Sigma_j$ and
$|\calq|\mik (k+1)^j$ there exist  $f\in\calc$ and a refinement $\calr$ of $\calq$ with $\calr\subseteq\Sigma_{j+1}$ and
$|\calr|\mik (k+1)^{j+1}$, and such that $\|\ave(f \, | \, \cala_{\calr})-\ave(f \, | \, \cala_{\calq})\|_{L_p}> \delta$.
\end{enumerate}
Recursively and using hypothesis (H2), we select a finite sequence $\calp_0,\dots,\calp_n$ of partitions of $\Omega$
with $\calp_0=\calp$ and a finite sequence $f_1,\dots,f_n$ in $\calc$ such that for every $i\in [n]$ we have:
(P1) $\calp_i$ is a refinement of $\calp_{i-1}$, (P2) $\calp_i\subseteq \Sigma_{m+i}$ and $|\calp_i|\mik (k+1)^{m+i}$,
and (P3) $\|\ave(f_i \, | \, \cala_{\calp_i})-\ave(f_i \, | \, \cala_{\calp_{i-1}})\|_{L_p}>\delta$. It follows, 
in particular, that $(\cala_{\calp_i})_{i=0}^n$ is an increasing sequence of finite sub-$\sigma$-algebras of $\calf$. 
Also note that, by the classical pigeonhole principle and the fact that $|\calc|=\ell$, there exist $g\in\calc$ and 
$I\subseteq [n]$ with $|I|\meg n/\ell$ and such that $g=f_i$ for every $i\in I$. 

Next, set $f=g-\ave(g\, | \, \cala_{\calp})$ and let $(d_i)_{i=0}^n$ be the difference sequence associated with the finite 
martingale $\ave(f\, | \, \cala_{\calp_0}),\dots,\ave(f\, | \, \cala_{\calp_n})$. Observe that for every $i\in I$ we have
$d_i=\ave(g \, | \, \cala_{\calp_i})-\ave(g \, | \, \cala_{\calp_{i-1}})$ and so, by the choice of $I$ and property (P3), 
we obtain that $\|d_i\|_{L_p}>\delta$ for every $i\in I$. Therefore, by Proposition \ref{pa.1}, we have
\begin{eqnarray} \label{e3.13}
\sigma & \stackrel{\eqref{e3.12}}{\mik} & \sqrt{p-1}\,\delta \Big(\frac{n}{\ell}\Big)^{1/2} \mik
\sqrt{p-1}\, \delta |I|^{1/2} \\
& < & \sqrt{p-1} \cdot \Big( \sum_{i=0}^n \|d_i\|^2_{L_p} \Big)^{1/2} \nonumber \\ 
& \stackrel{\eqref{ea.5}}{\mik} & \big\| \sum_{i=0}^n d_i \big\|_{L_p} =
\|\ave(g \, | \, \cala_{\calp_n})-\ave(g \, | \, \cala_{\calp})\|_{L_p}. \nonumber 
\end{eqnarray}
On the other hand, by properties (P1) and (P2), we see that $\calp_n$ is a refinement of $\calp$ with $\calp_n\subseteq\Sigma_{m+n}$
and $|\calp_n|\mik (k+1)^{m+n}$. Therefore, by hypothesis (H1), we must have 
$\|\ave(g \, | \, \cala_{\calp_n}\!)-\ave(g \, | \, \cala_{\calp})\|_{L_p}\mik \sigma$ which contradicts, of course, the estimate
in \eqref{e3.13}. The proof of Lemma \ref{l3.4} is thus completed.
\end{proof}
The following lemma is the last step of the proof of Theorem \ref{t3.1}.
\begin{lem} \label{l3.5}
Let $k, \ell$ be positive integers, $0<\sigma \mik 1$, $1<p\mik 2$ and $H\colon\nn\to\rr$ a growth function.
Set $L=\lceil \ell\, \sigma^{-2}(p-1)^{-1}\rceil$ and define $(n_i)$ recursively by the rule
\begin{equation} \label{e3.14}
\begin{cases}
n_0=0, \\
n_{i+1}=n_i+\lceil \sigma^2\, \ell\, H(n_i)^2 (p-1)^{-1}\rceil.
\end{cases}
\end{equation}
Also let $(\bo,\calf,\pp)$ be a probability space and let $(\Sigma_i)$ be an increasing sequence of $k$-semirings on $\Omega$
with $\Sigma_i\subseteq\calf$ for every $i\in\nn$. Finally, let $\calc$ be a family in $L_p(\bo,\calf,\pp)$ such that $\|f\|_{L_p}\mik 1$
for every $f\in\calc$ and with $|\calc|=\ell$. Then there exist $j\in \{0,\dots,L-1\}$, $J\in \{n_j,\dots,n_{j+1}\}$ and two partitions
$\calp, \calq$ of $\Omega$ with the following properties: \emph{(i)} $\calp\subseteq\Sigma_{n_j}$ and $\calq\subseteq \Sigma_J$, 
\emph{(ii)} $|\calp|\mik (k+1)^{n_j}$ and $|\calq|\mik (k+1)^J$, \emph{(iii)} $\calq$ is a refinement of\, $\calp$, and \emph{(iv)} 
$\|\ave(f \, | \, \cala_{\calq})-\ave(f \, | \, \cala_{\calp})\|_{L_p}\mik\sigma$ and
$\|f-\ave(f \, | \, \cala_{\calq})\|_{\Sigma_{J+1}}\mik 1/H(n_j)$ for every $f\in\calc$.
\end{lem}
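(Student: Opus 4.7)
The plan is to iterate Lemma \ref{l3.4} at most $L$ times, feeding the current partition and error $\delta=1/H(\text{current level})$ into the lemma, refining whenever ``case (a)'' fires and terminating as soon as ``case (b)'' holds. The recursion defining $(n_i)$ is arranged precisely so that the budget $\lceil\sigma^2\ell H(n_t)^2(p-1)^{-1}\rceil$ of Lemma \ref{l3.4} at outer stage $t$ equals $n_{t+1}-n_t$; termination within $L$ outer stages will be forced by an energy-increment argument based on the Ricard--Xu $L_p$ martingale inequality from Proposition \ref{pa.1}.

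More concretely, I would first set $\calp_0=\{\bo\}$, so that $\calp_0\subseteq\Sigma_0=\Sigma_{n_0}$ and $|\calp_0|\mik (k+1)^{n_0}$. Assuming inductively that at stage $t\in\{0,\dots,L-1\}$ a partition $\calp_t$ of $\bo$ has been built with $\calp_t\subseteq\Sigma_{n_t}$ and $|\calp_t|\mik (k+1)^{n_t}$, I would apply Lemma \ref{l3.4} with this partition $\calp_t$, starting index $m=n_t$, error $\delta=1/H(n_t)$, and the given $\sigma$, $p$, $k$, $\ell$, $(\Sigma_i)$, $\calc$. By the definition of $(n_i)$, the integer $n$ in \eqref{e3.12} equals $n_{t+1}-n_t$, so the returned index $J$ lies in $\{n_t,\dots,n_{t+1}\}$ and the lemma produces a refinement $\calq_t$ of $\calp_t$ with $\calq_t\subseteq\Sigma_J$ and $|\calq_t|\mik (k+1)^J$. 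If ``case (b)'' holds, I take $j=t$, $\calp=\calp_t$, $\calq=\calq_t$, and all conclusions of Lemma \ref{l3.5} follow immediately from ``case (b)'' (with $\delta=1/H(n_j)$) and the inductive hypothesis. Otherwise ``case (a)'' holds, and I set $\calp_{t+1}=\calq_t$; since $\calp_{t+1}\subseteq\Sigma_J\subseteq\Sigma_{n_{t+1}}$ and $|\calp_{t+1}|\mik (k+1)^{n_{t+1}}$, the inductive hypothesis is preserved.

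To close the argument, I would show that ``case (a)'' cannot occur at every $t\in\{0,\dots,L-1\}$. Arguing by contradiction, such a failure produces $f_t\in\calc$ with $\|\ave(f_t\,|\,\cala_{\calp_{t+1}})-\ave(f_t\,|\,\cala_{\calp_t})\|_{L_p}>\sigma$ for every $t\in\{0,\dots,L-1\}$. The pigeonhole principle then yields $g\in\calc$ and a set $I\subseteq\{0,\dots,L-1\}$ with $|I|\meg L/\ell$ and $g=f_t$ for every $t\in I$. Setting $M_t=\ave(g\,|\,\cala_{\calp_t})$ for $t\in\{0,\dots,L\}$, $d_0=M_0$, and $d_t=M_t-M_{t-1}$ for $t\meg 1$, the sequence $(d_t)_{t=0}^L$ is the martingale difference sequence of $(M_t)$ and satisfies $\|d_{t+1}\|_{L_p}>\sigma$ for every $t\in I$. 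Hence $\sum_{t=0}^L\|d_t\|_{L_p}^2>\sigma^2|I|\meg \sigma^2 L/\ell$, while Proposition \ref{pa.1} gives
\[
\sqrt{p-1}\,\Big(\sum_{t=0}^L\|d_t\|_{L_p}^2\Big)^{1/2}\mik \Big\|\sum_{t=0}^L d_t\Big\|_{L_p}=\|M_L\|_{L_p}\mik\|g\|_{L_p}\mik 1.
\]
Combining the two bounds yields $L<\ell\sigma^{-2}(p-1)^{-1}$, contradicting $L=\lceil \ell\sigma^{-2}(p-1)^{-1}\rceil$.

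The main point requiring care is the bookkeeping of levels, namely matching the inner budget of Lemma \ref{l3.4} at stage $t$ to the jump $n_{t+1}-n_t$ and verifying that the refinement step keeps us inside $\Sigma_{n_{t+1}}$ with the correct cardinality bound; these are small but must line up exactly for the induction to propagate. The substantive analytic input is entirely absorbed by Proposition \ref{pa.1}, whose use here mirrors that of Lemma \ref{l3.4} but is driven by an outer-level pigeonholing rather than by a single instance of hypothesis (H1).
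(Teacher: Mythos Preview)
Your proposal is correct and follows essentially the same approach as the paper's own proof: iterate Lemma \ref{l3.4} starting from the trivial partition, match the inner budget at outer stage $t$ to $n_{t+1}-n_t$ via the choice $\delta=1/H(n_t)$, and rule out $L$ consecutive instances of case (a) by pigeonholing on $\calc$ and invoking Proposition \ref{pa.1} on the resulting martingale difference sequence. The paper presents the same argument more tersely, assuming the lemma fails from the outset and deriving the contradiction $1<1$ directly from \eqref{ea.5}, but the content is identical.
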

\begin{proof}
It is similar to the proof of Lemma \ref{l3.4}. Indeed, assume, towards a contradiction, that the lemma is false. 
Recursively and using Lemma \ref{l3.4}, we select a finite sequence $J_0,\dots, J_L$ in $\nn$ with $J_0=0$, a finite 
sequence $\calp_0,\dots,\calp_L$ of partitions of $\bo$ with $\calp_0=\{\bo\}$ and a finite sequence $f_1,\dots,f_L$ 
in $\calc$ such that for every $i\in [L]$ we have that: (P1) $J_i\in \{n_{i-1},\dots,n_i\}$, (P2) the partition $\calp_i$ is 
a refinement of $\calp_{i-1}$, (P3) $\calp_i\subseteq \Sigma_{J_i}$ with $|\calp_i|\mik (k+1)^{J_i}$, and (P4)
$\|\ave(f_i \, | \, \cala_{\calp_i})-\ave(f_i \, | \, \cala_{\calp_{i-1}})\|_{L_p}>\sigma$. As in the proof of Lemma \ref{l3.4},
we observe that $(\cala_{\calp_i})_{i=0}^L$ is an increasing sequence of finite sub-$\sigma$-algebras of $\calf$,
and we select $g\in\calc$ and $I\subseteq [L]$ with $|I|\meg L/\ell$ and such that $g=f_i$ for every $i\in I$. 
Let $(d_i)_{i=0}^L$ be the difference sequence associated with the finite martingale 
$\ave(g\, | \, \cala_{\calp_0}),\dots,\ave(g\, | \, \cala_{\calp_L})$. Notice that, by property (P4), we have $\|d_i\|_{L_p}>\sigma$
for every $i\in I$. Hence, by the choice of $L$, Proposition \ref{pa.1} and the fact that $\|g\|_{L_p}\mik 1$, we conclude that
\begin{eqnarray} \label{e3.15}
1 & \mik & \sqrt{p-1}\, \sigma |I|^{1/2} < 
\sqrt{p-1} \cdot \Big( \sum_{i=0}^L \|d_i\|^2_{L_p} \Big)^{1/2} \\ 
& \stackrel{\eqref{ea.5}}{\mik} & \big\| \sum_{i=0}^L d_i \big\|_{L_p} =
\|\ave(g \, | \, \cala_{\calp_L})\|_{L_p} \mik \|g\|_{L_p}\mik 1 \nonumber 
\end{eqnarray}
which is clearly a contradiction. The proof of Lemma \ref{l3.5} is completed.
\end{proof}
We are ready to complete the proof of Theorem \ref{t3.1}.
\begin{proof}[Proof of Theorem \emph{\ref{t3.1}}]
Fix the data $k, \ell, \sigma, p$, the growth function $F$, the sequence $(\cals_i)$ and the family $\calc$. We define 
$H\colon\nn\to\rr$ by the rule $H(n)=F^{(n+2)}(0)$ and we observe that $H$ is a growth function. Moreover, for every 
$i\in\nn$ let $m_i=F^{(i)}(0)$ and set $\Sigma_i=\cals_{m_i}$. Notice that $(\Sigma_i)$ is an increasing sequence of 
$k$-semirings of $\bo$ with $\Sigma_i\subseteq\calf$ for every $i\in\nn$.

Let $j,J,\calp$ and $\calq$ be as in Lemma \ref{l3.5} when applied to $k,\ell,\sigma,p,H$, the sequence $(\Sigma_i)$ and the 
family $\calc$. We set 
\begin{equation} \label{e3.16}
N=m_{n_j}=F^{(n_j)}(0)
\end{equation}
and we claim that the natural number $N$ and the partitions $\calp$ and $\calq$ are as desired.

Indeed, notice first that $n_j\mik n_{L-1}$. Since $F$ is a growth function, by the choice of $h$ and $R$ in \eqref{e3.1} 
and \eqref{e3.2} respectively, we have
\begin{equation} \label{e3.17}
N\mik F^{(n_{L-1})}(0)=F^{(R)}(0) \stackrel{\eqref{e3.3}}{=} \mathrm{Reg}(k,\ell,\sigma,p,F).
\end{equation}
On the other hand, note that $n_j\mik F^{(n_j)}(0)=N$ and so $|\calp|\mik (k+1)^{n_j}\mik (k+1)^N$ and 
$\calp\subseteq \Sigma_{n_j}=\cals_N$. Moreover, by Lemma \ref{l3.5}, we see that $\calq$ is a finite refinement of $\calp$ 
with $\calq\subseteq \cals_i$ for some $i\meg N$. It follows that $N, \calp$ and $\calq$ satisfy the requirements of the theorem. 
Finally, let $f\in\calc$ be arbitrary and write $f=f_{\mathrm{str}}+ f_{\mathrm{err}}+ f_{\mathrm{unf}}$ where 
$f_{\mathrm{str}}=\ave(f \, | \, \cala_{\calp})$, $f_{\mathrm{err}}=\ave(f \, | \, \cala_{\calq})-\ave(f \, | \, \cala_{\calp})$
and $f_{\mathrm{unf}}=f-\ave(f \, | \, \cala_{\calq})$. Invoking Lemma \ref{l3.5}, we obtain that
\begin{equation} \label{e3.18}
\|f_{\mathrm{err}}\|_{L_p} = \|\ave(f \, | \, \cala_{\calq})-\ave(f \, | \, \cala_{\calp})\|_{L_p} \mik \sigma. 
\end{equation}
Also observe that $n_j+1\mik J+1$ which is easily seen to imply that $\cals_{F(N)}\subseteq \Sigma_{J+1}$. 
Therefore, using Lemma \ref{l3.5} once again, for every $i\in\{0,\dots, F(N)\}$ we have
\begin{eqnarray} \label{e3.19}
\|f_{\mathrm{unf}}\|_{\cals_i} & = & \|f-\ave(f \, | \, \cala_{\calq}) \|_{\cals_i} \mik
\|f-\ave(f \, | \, \cala_{\calq}) \|_{\Sigma_{J+1}} \\
& \mik & \frac{1}{H(n_j)} = \frac{1}{F\big( F(N)\big)} \mik \frac{1}{F(i)}. \nonumber
\end{eqnarray}
The proof of Theorem \ref{t3.1} is completed.
\end{proof}

%--------------------------Applications-----------------------%

\section{Applications}

\numberwithin{equation}{section}

%--------------------Uniform partitions-----------------%

\subsection{Uniform partitions}

In this section we will discuss some applications of our main result (more applications can be found in \cite{DK}).
We start with a consequence of Theorem \ref{t3.1} which is closer in spirit to the original formulation of Szemer\'{e}di's 
regularity lemma \cite{Sz}.

Recall that if $(\bo,\calf,\pp)$ is a probability space, $f\in L_1(\bo,\calf,\pp)$ and $S\in\calf$ is an event of 
non-zero probability, then $\ave(f\, | \, S)$ stands for the conditional expectation of $f$ with respect to $S$, that is,
$\ave(f\, | \, S) =\big( \int_S f\, d\pp\big)/ \pp(S)$. If $\pp(S)=0$, then by convention we set $\ave(f\, | \, S)=0$. 
We have the following definition.
\begin{defn} \label{d4.1}
Let  $(\bo,\calf,\pp)$ be a probability space, $k$ a positive integer and $\cals$ a $k$-semiring on $\bo$ with $\cals\subseteq\calf$.
Also let $f\in L_1(\bo,\calf,\pp)$, $0<\eta\mik 1$ and $S\in\cals$. We say that the set $S$ is \emph{$(f,\cals,\eta)$-uniform} if for
every $T\subseteq S$ with $T\in\cals$ we have
\begin{equation} \label{e4.1}
\big| \int_T \big(f-\ave(f\, | \, S)\big)\, d\pp \big| \mik \eta\cdot \pp(S).
\end{equation}
Moreover, for every $\calc\subseteq\cals$ we set $\mathrm{Unf}(\calc,f,\eta) = \{ C\in\calc: C \text{ is $(f,\cals,\eta)$-uniform}\}$.
\end{defn}
Notice that if $S\in\cals$ with $\pp(S)=0$, then the set $S$ is $(f,\cals,\eta)$-uniform for every $0<\eta\mik 1$. The same remark 
of course applies if the random variable $f$ is constant on $S$. Also note that the concept of $(f,\cals,\eta)$-uniformity is closely
related to the $\cals\text{-uniformity}$ norm. Indeed, let $S\in\cals$ with $\pp(S)>0$ and observe that the set $S$ is
$(f,\cals,\eta)$-uniform if and only if the function $f-\ave(f \, | \, S)$, viewed as a random variable in $L_1(\bo,\calf,\pp_S)$,
has $\cals$-uniformity norm less than or equal to $\eta$. (Here, $\pp_S$ stands for the conditional probability measure of $\pp$
relative to $S$.) In particular, the set $\bo$ is $(f,\cals,\eta)$-uniform if and only if $\|f-\ave(f)\|_{\cals}\mik\eta$.

We have the following proposition (see also \cite[Section 11.6]{TV}).
\begin{prop} \label{p4.2}
For every positive integer $k$, every $1<p\mik 2$ and every $0<\eta\mik 1$ there exists a positive integer $\mathrm{U}(k,p,\eta)$
with the following property. If $(\bo,\calf,\pp)$ is a probability space, $\cals$ a $k$-semiring on $\bo$ with $\cals\subseteq \calf$
and $f\in L_p(\bo,\calf,\pp)$ with $\|f\|_{L_p}\mik 1$, then there exist a positive integer $M\mik \mathrm{U}(k,p,\eta)$ and a partition
$\calp$ of $\bo$ with $\calp\subseteq\cals$ and $|\calp|=M$, and such that
\begin{equation} \label{e4.2}
\sum_{S\in\mathrm{Unf}(\calp,f,\eta)} \pp(S)\meg 1-\eta.
\end{equation}
\end{prop}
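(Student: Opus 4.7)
The plan is to apply Corollary \ref{c3.2} with the parameters $\sigma:=\eta^2/4$ and the growth function $F\colon\nn\to\rr$ defined by $F(n):=\lceil 4n/\eta^2\rceil + n + 1$. This $F$ is indeed a growth function and satisfies $F(M)\meg 4M/\eta^2$ for every $M\in\nn$. Corollary \ref{c3.2} then yields a positive integer $M$ with $M\mik \mathrm{Reg}'(k,\eta^2/4,p,F)$, a partition $\calp\subseteq\cals$ of $\bo$ with $|\calp|=M$, and a refinement $\calq\subseteq\cals$ of $\calp$, together with the decomposition $f=f_{\mathrm{str}}+f_{\mathrm{err}}+f_{\mathrm{unf}}$ satisfying $\|f_{\mathrm{err}}\|_{L_p}\mik \eta^2/4$ and $\|f_{\mathrm{unf}}\|_{\cals}\mik 1/F(M)$. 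The goal is to show that this $\calp$ already works and to set $\mathrm{U}(k,p,\eta):=\mathrm{Reg}'(k,\eta^2/4,p,F)$.

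Fix an atom $S\in\calp$ with $\pp(S)>0$, and note that $\ave(f\, | \, \cala_{\calp})$ is equal on $S$ to the constant $\ave(f\, | \, S)$. For every $T\in\cals$ with $T\subseteq S$ we then split
\begin{equation*}
\int_T \big(f-\ave(f\, | \, S)\big)\, d\pp = \int_T f_{\mathrm{err}}\, d\pp + \int_T f_{\mathrm{unf}}\, d\pp,
\end{equation*}
and bound the two summands in absolute value by $\int_S |f_{\mathrm{err}}|\, d\pp=\pp(S)\,\ave(|f_{\mathrm{err}}|\, | \, S)$ (using $T\subseteq S$) and by $\|f_{\mathrm{unf}}\|_{\cals}\mik 1/F(M)$, respectively. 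Call an atom $S$ \emph{good} if $\ave(|f_{\mathrm{err}}|\, | \, S)\mik \eta/2$ and $\pp(S)\meg 2/(\eta F(M))$. For a good $S$ the displayed integral is at most $(\eta/2)\pp(S)+1/F(M)\mik \eta\,\pp(S)$, so $S\in\mathrm{Unf}(\calp,f,\eta)$; atoms of zero probability belong to $\mathrm{Unf}(\calp,f,\eta)$ by convention.

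It remains to bound the total probability of the bad atoms by $\eta$. By Markov's inequality applied to the random variable $\ave(|f_{\mathrm{err}}|\, | \,\cala_{\calp})$, and using the monotonicity $\|f_{\mathrm{err}}\|_{L_1}\mik \|f_{\mathrm{err}}\|_{L_p}\mik \eta^2/4$, the atoms $S$ with $\ave(|f_{\mathrm{err}}|\, | \, S)>\eta/2$ have total probability at most $(2/\eta)\cdot \eta^2/4=\eta/2$. On the other hand, since $|\calp|=M$, the atoms with $\pp(S)<2/(\eta F(M))$ have total probability at most $2M/(\eta F(M))\mik \eta/2$ by the choice of $F$. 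Summing these two contributions gives $\sum_{S\in\mathrm{Unf}(\calp,f,\eta)}\pp(S)\meg 1-\eta$, as required.

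The main subtlety is the balancing of quantifiers: the growth function $F$ must be prescribed \emph{before} the value of $M$ is known, yet the ``non-uniformity budget'' $1/F(M)$ must beat the smallest probability $2/(\eta F(M))$ one is willing to retain, for the specific $M$ that Corollary \ref{c3.2} eventually returns. This is precisely why we take $F$ linear in $n$, with a slope $4/\eta^2$ built in; this makes the required inequality $F(M)\meg 4M/\eta^2$ hold automatically for \emph{every} $M$, irrespective of the (enormous) bound $\mathrm{Reg}'(k,\eta^2/4,p,F)$.
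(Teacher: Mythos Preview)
Your argument is correct and follows essentially the same approach as the paper's: apply Corollary \ref{c3.2} with $\sigma$ of order $\eta^2$ and a growth function linear in $n$ with slope of order $\eta^{-2}$, then split the integrand over a bad atom and control the two pieces via a Markov-type estimate on $f_{\mathrm{err}}$ and a counting estimate on atoms of small probability. The paper packages the second step as a separate lemma (Lemma \ref{l4.3}), stated for an arbitrary decomposition with $f_{\mathrm{str}}$ merely constant on atoms; you instead use the specific identity $f_{\mathrm{str}}=\ave(f\,|\,\cala_{\calp})=\ave(f\,|\,S)$ on $S$, which lets you write $f-\ave(f\,|\,S)=f_{\mathrm{err}}+f_{\mathrm{unf}}$ directly and avoid the extra factors of $2$ coming from the subtracted conditional means---hence your constants $\eta^2/4$ in place of the paper's $\eta^2/8$.
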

The following lemma will enable us to reduce Proposition \ref{p4.2} to Corollary \ref{c3.2}.
\begin{lem} \label{l4.3}
Let $(\bo,\calf,\pp)$ be a probability space, $k$ a positive integer and $\cals$ a $k\text{-semiring}$ on $\bo$ with
$\cals\subseteq \calf$. Also let $\calp$ be a finite partition of $\bo$ with $\calp\subseteq \calf$, $f\in L_1(\bo,\calf,\pp)$ 
and $0<\eta\mik 1$. Assume that the function $f$ admits a decomposition $f=f_{\mathrm{str}}+ f_{\mathrm{err}}+f_{\mathrm{unf}}$ 
into integrable random variables such that $f_{\mathrm{str}}$ is constant on each $S\in\calp$ and the functions $f_{\mathrm{err}}$
and $f_{\mathrm{unf}}$ obey the estimates $\|f_{\mathrm{err}}\|_{L_1}\mik \eta^2/8$ and 
$\|f_{\mathrm{unf}}\|_{\cals}\mik (\eta^2/8) |\calp|^{-1}$. Then we have
\begin{equation} \label{e4.3}
\sum_{S\notin\mathrm{Unf}(\calp,f,\eta)} \pp(S) \mik \eta.
\end{equation}
\end{lem}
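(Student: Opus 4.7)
For each ``bad'' atom $S \in \calp \setminus \mathrm{Unf}(\calp,f,\eta)$ with $\pp(S)>0$, Definition \ref{d4.1} supplies a witness $T_S \subseteq S$ with $T_S \in \cals$ such that $|\int_{T_S}(f-\ave(f\,|\,S))\,d\pp| > \eta\,\pp(S)$; atoms with $\pp(S)=0$ contribute nothing and can be discarded. I tacitly read the hypothesis as including $\calp \subseteq \cals$ (as is the case in the Proposition \ref{p4.2} application), which is what gives $\mathrm{Unf}(\calp,f,\eta)$ its meaning and what will let the $\cals$-uniformity norm be applied to each atom below.

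The one real idea is that since $f_{\mathrm{str}}$ is constant on $S$ it equals $\ave(f_{\mathrm{str}}\,|\,S)$ there, so the $f_{\mathrm{str}}$ piece cancels out of the witness integrand. Setting $g = f_{\mathrm{err}} + f_{\mathrm{unf}}$ and using $\pp(T_S)/\pp(S)\mik 1$, the witness inequality reduces to
\begin{equation*}
\eta\,\pp(S) < \Big|\int_{T_S} g\,d\pp\Big| + \Big|\int_S g\,d\pp\Big| \mik 2\int_S |f_{\mathrm{err}}|\,d\pp + 2\|f_{\mathrm{unf}}\|_{\cals},
\end{equation*}
where the final step uses $T_S,S \in \cals$ to dominate the $f_{\mathrm{unf}}$ integrals by the $\cals$-uniformity norm and bounds the $f_{\mathrm{err}}$ integrals by integrals of $|f_{\mathrm{err}}|$.

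Summing this inequality over the bad atoms and using disjointness of $\calp$, the $f_{\mathrm{err}}$ contribution is at most $2\|f_{\mathrm{err}}\|_{L_1}\mik \eta^2/4$, while the $f_{\mathrm{unf}}$ contribution is at most $2|\calp|\cdot\|f_{\mathrm{unf}}\|_{\cals}\mik \eta^2/4$. Hence
\begin{equation*}
\eta\sum_{S\notin\mathrm{Unf}(\calp,f,\eta)}\pp(S) < \frac{\eta^2}{2},
\end{equation*}
which after dividing by $\eta$ yields the claim (in fact with $\eta/2$ in place of $\eta$). There is no substantive obstacle; the one piece of quantitative balancing is that the factor $|\calp|^{-1}$ built into the hypothesis on $\|f_{\mathrm{unf}}\|_{\cals}$ is exactly what is needed to absorb the loss from summing a single uniformity bound over all $|\calp|$ atoms.
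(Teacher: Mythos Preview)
Your argument is correct and shares the paper's key steps: cancelling $f_{\mathrm{str}}$ because it is constant on each $S$, and then dominating the $f_{\mathrm{err}}$ and $f_{\mathrm{unf}}$ contributions by $\int_S|f_{\mathrm{err}}|\,d\pp$ and $\|f_{\mathrm{unf}}\|_{\cals}$ respectively (using $T_S,S\in\cals$, which, as you note, is implicit in the notation $\mathrm{Unf}(\calp,f,\eta)$ and is how the lemma is applied in Proposition~\ref{p4.2}). The only real difference lies in the aggregation step. The paper, after deriving essentially the same per-atom inequality $\eta\,\pp(S) < 2\,\ave(|f_{\mathrm{err}}|\,|\,S)\,\pp(S) + 2\|f_{\mathrm{unf}}\|_{\cals}$, splits the bad atoms into two families $\cala=\{S:\ave(|f_{\mathrm{err}}|\,|\,S)\meg \eta/4\}$ and $\calb=\{S:\pp(S)\mik 4\eta^{-1}\|f_{\mathrm{unf}}\|_{\cals}\}$ and bounds the $\pp$-measure of each by $\eta/2$ via a Markov-type estimate. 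Your direct summation over bad atoms is a bit more streamlined and in fact yields the sharper bound $\sum_{S\notin\mathrm{Unf}(\calp,f,\eta)}\pp(S)<\eta/2$.
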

\begin{proof}
Fix $S\notin \mathrm{Unf}(\calp,f,\eta)$. We select $T\subseteq S$ with $T\in\cals$ such that
\begin{equation} \label{e4.4}
\eta\cdot \pp(S) < \big| \int_T \big(f -\mathbb{E}(f\, | \, S)\big)\, d\pp \big|.
\end{equation}
The function $f_{\mathrm{str}}$ is constant on $S$ and so, by \eqref{e4.4}, we see that
\begin{equation} \label{e4.5}
\eta\cdot\pp(S) < \big|\int_T \big(f_{\mathrm{err}}-\mathbb{E}(f_{\mathrm{err}} \, | \, S)\big)\, d\pp \big| +
\big|\int_T\big(f_{\mathrm{unf}} -\mathbb{E}(f_{\mathrm{unf}} \, | \, S)\big)\, d\pp\big|.
\end{equation}
Next observe that
\begin{equation} \label{e4.6}
\big|\int_T\big(f_{\mathrm{err}}- \mathbb{E}(f_{\mathrm{err}} \, | \, S)\big)\, d\pp \big|\mik
2 \mathbb{E}( |f_{\mathrm{err}}| \, | \, S) \cdot \pp(S)
\end{equation}
and
\begin{equation} \label{e4.7}
\big| \int_T\big(f_{\mathrm{unf}} -\mathbb{E}(f_{\mathrm{unf}} \, | \, S)\big)\, d\pp\big| \mik 2\|f_{\mathrm{unf}}\|_{\cals}.
\end{equation}
Finally, notice that $\pp(S)>0$ since $S\notin\mathrm{Unf}(\calp,f,\eta)$. Thus, setting
\begin{equation} \label{e4.8}
\cala=\{S\in\calp: \mathbb{E}( |f_{\mathrm{err}}| \, | \, S)\meg \eta/4\} \ \text{ and } \
\calb=\{S\in\calp: \pp(S)\mik 4\eta^{-1}\|f_{\mathrm{unf}}\|_{\cals}\} 
\end{equation}
and invoking \eqref{e4.5}--\eqref{e4.7}, we obtain that $\calp\setminus \mathrm{Unf}(\calp,f,\eta)\subseteq \cala\cup\calb$.

Since the family $\calp$ is a partition, it consists of pairwise disjoint sets. Hence, 
\begin{equation} \label{e4.9}
\sum_{S\in\cala} \pp(S) \mik \frac{4}{\eta}\Big( \sum_{S\in\cala} \int_S |f_{\mathrm{err}}|\, d\pp \Big) \mik
\frac{4}{\eta} \|f_{\mathrm{err}}\|_{L_1} \mik \frac{\eta}{2}.
\end{equation}
Moreover,
\begin{equation} \label{e4.10}
\sum_{S\in\calb} \pp(S) \mik \frac{4\|f_{\mathrm{unf}}\|_{\cals}}{\eta} \cdot |\calb|\mik
\frac{4\|f_{\mathrm{unf}}\|_{\cals}}{\eta} \cdot |\calp| \mik \frac{\eta}{2}.
\end{equation}
By \eqref{e4.9} and \eqref{e4.10} and using the inclusion $\calp\setminus \mathrm{Unf}(\calp,f,\eta)\subseteq \cala\cup\calb$,
we conclude that the estimate in \eqref{e4.3} is satisfied and the proof is completed.
\end{proof}
We proceed to the proof of Proposition \ref{p4.2}.
\begin{proof}[Proof of Proposition \emph{\ref{p4.2}}]
Fix $k,p$ and $\eta$. We set $\sigma=\eta^2/8$ and we define $F\colon\nn\to \rr$ by the rule
$F(n)=(n/\sigma)+1=(8n/\eta^2)+1$ for every $n\in\nn$. Notice that $F$ is a growth function. We set
\begin{equation} \label{e4.11}
\mathrm{U}(k,p,\eta)= \mathrm{Reg}'(k,p,\sigma, F)
\end{equation}
and we claim that $\mathrm{U}(k,p,\eta)$ is as desired. Indeed, let $(\bo,\calf,\pp)$ be a probability
space and $\cals$ a $k$-semiring on $\Omega$ with $\cals\subseteq\calf$. Also let $f\in L_p(\bo,\calf,\pp)$ with 
$\|f\|_{L_p}\mik 1$. By Corollary \ref{c3.2}, there exist a positive integer $M\mik \mathrm{U}(k,p,\eta)$, a partition $\calp$ 
of $\bo$ with $\calp\subseteq\cals$ and $|\calp|=M$, and a finite refinement $\calq$ of $\calp$ with $\calq\subseteq\cals$
such that, setting 
\begin{equation} \label{e4.12}
f_{\mathrm{str}}=\ave(f \, | \, \cala_{\calp}), \ \ f_{\mathrm{err}}=\ave(f \, | \, \cala_{\calq})-\ave(f \, | \, \cala_{\calp}) 
\ \text{ and } \ f_{\mathrm{unf}}=f-\ave(f \, | \, \cala_{\calq}),
\end{equation}
we have the estimates $\|f_{\mathrm{err}}\|_{L_p}\mik \sigma$ and $\|f_{\mathrm{unf}}\|_{\cals}\mik 1/F(M)$. It follows
that $f$ admits a decomposition $f=f_{\mathrm{str}}+f_{\mathrm{err}}+ f_{\mathrm{unf}}$ into integrable random
variables such that $f_{\mathrm{str}}$ is constant  on each $S\in\calp$, $\|f_{\mathrm{err}}\|_{L_p}\mik \sigma$ and
$\|f_{\mathrm{unf}}\|_{\cals}\mik 1/F(M)$. Notice that, by the monotonicity of the $L_p$ norms, we have 
$\|f_{\mathrm{err}}\|_{L_1}\mik \sigma$. Hence, by Lemma \ref{l4.3} and the choice of $\sigma$ and $F$, we conclude 
that the estimate in \eqref{e4.2} is satisfied and the proof of Proposition \ref{p4.2} is completed.
\end{proof}
We close this subsection by presenting an application of Proposition \ref{p4.2} for subsets of hypercubes 
(see also \cite[Section 2.1.3]{Tao3}). Specifically, let $A$ be a finite alphabet with $|A|\meg 2$ and set $K=|A|(|A|-1)2^{-1}$. 
Also let $n$ be a positive integer. As in Example \ref{ex3}, we view $A^n$ as a discrete probability space equipped 
with the uniform probability measure which we shall denote by $\pp$. More generally, for every nonempty subset $S$ 
of $A^n$ by  $\pp_S$ we shall denote the uniform probability measure concentrated on $S$, that is, 
$\pp_S(X)=|X\cap S|/|S|$ for every $X\subseteq A^n$. Recall that $\cals(A^n)$ stands for the $K\text{-semiring}$
on $A^n$ consisting of all subsets $X$ of $A^n$ which are written as
\begin{equation} \label{e4.13}
X=\bigcap_{\{a,b\}\in{A\choose 2}} X_{\{a,b\}}
\end{equation}
where $X_{\{a,b\}}$ is $(a,b)$-insensitive for every $\{a,b\} \in {A\choose 2}$.

Now let $D$ be a subset of $A^n$, $0<\ee\mik 1$ and $S\in\cals(A^n)$ with $S\neq\emptyset$. Notice that the set $S$ is
$(\mathbf{1}_D,\cals(A^n),\ee^2)$-uniform if and only if for every nonempty $T\subseteq S$ with $T\in\cals(A^n)$ we have
\begin{equation} \label{e4.14}
|\pp_T(D)-\pp_S(D)| \cdot \pp(T) \mik \ee^2 \cdot\pp(S).
\end{equation}
In particular, if $S$ is nonempty and $(\mathbf{1}_D,\cals(A^n),\ee^2)$-uniform, then for every $T\subseteq S$ with 
$T\in\cals(A^n)$ and $|T|\meg\ee |S|$ we have $|\pp_T(D)-\pp_S(D)| \mik\ee$. Thus, by Proposition \ref{p4.2} and taking
into account these remarks, we obtain the following corollary.
\begin{cor} \label{c4.4}
For every integer $k\meg 2$ and every $0<\ee\mik 1$ there exists a positive integer $N(k,\ee)$ with the following property.
If $n$ is a positive integer, $A$ is an alphabet with $|A|=k$ and $D$ is a subset of $A^n$, then there exist a positive integer
$M\mik N(k,\ee)$, a partition $\calp$ of $A^n$ with $\calp\subseteq\cals(A^n)$ and $|\calp|=M$, and a subfamily 
$\calp'\subseteq \calp$ with $\pp(\cup\calp')\meg 1-\ee$ such that
\begin{equation} \label{e4.15}
|\pp_T(D)-\pp_S(D)|\mik\ee
\end{equation}
for every $S\in\calp'$ and every $T\subseteq S$ with $T\in\cals(A^n)$ and $|T|\meg\ee |S|$.
\end{cor}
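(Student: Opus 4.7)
The plan is to deduce this corollary directly from Proposition \ref{p4.2}, combining it with the discussion in the paragraph immediately preceding the corollary statement. The key input is the observation that the indicator $\mathbf{1}_D$ lies in $L_p(A^n,2^{A^n},\pp)$ with $\|\mathbf{1}_D\|_{L_p}\mik 1$ for any $1<p\mik 2$, and that $\cals(A^n)$ is a $K$-semiring on $A^n$ with $K=k(k-1)/2$, a quantity depending only on $k$ and not on $n$.

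The only genuinely substantive step is choosing the right uniformity parameter. The inequality \eqref{e4.14} rewrites the $(\mathbf{1}_D,\cals(A^n),\eta)$-uniformity of $S$ as $|\pp_T(D)-\pp_S(D)|\cdot\pp(T)\mik \eta\cdot\pp(S)$, so dividing through by $\pp(T)\meg \ee\pp(S)$ (valid whenever $|T|\meg\ee|S|$ and $S\neq\emptyset$) gives the bound $\ee^{-1}\eta$ on $|\pp_T(D)-\pp_S(D)|$. To force this bound to be $\mik \ee$, I would set $\eta=\ee^2$. Having made this choice, I would then define
\begin{equation*}
N(k,\ee)=\mathrm{U}\!\left(\tfrac{k(k-1)}{2},\,2,\,\ee^2\right),
\end{equation*}
which depends only on $k$ and $\ee$ as required.

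With this setup I would invoke Proposition \ref{p4.2} applied to $f=\mathbf{1}_D$, the $K$-semiring $\cals(A^n)$ on the discrete probability space $(A^n,2^{A^n},\pp)$, the exponent $p=2$, and the parameter $\eta=\ee^2$. This yields $M\mik N(k,\ee)$ and a partition $\calp$ of $A^n$ with $\calp\subseteq\cals(A^n)$, $|\calp|=M$, and $\sum_{S\in\mathrm{Unf}(\calp,\mathbf{1}_D,\ee^2)}\pp(S)\meg 1-\ee^2$. I would then take $\calp'\coloneqq\mathrm{Unf}(\calp,\mathbf{1}_D,\ee^2)$; disjointness of the members of $\calp$ together with $\ee\mik 1$ gives $\pp(\cup\calp')\meg 1-\ee^2\meg 1-\ee$.

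Finally, I would verify the pointwise estimate \eqref{e4.15}. For any $S\in\calp'$ and any $T\subseteq S$ with $T\in\cals(A^n)$ and $|T|\meg\ee|S|$, either $S=\emptyset$ (trivial, since then the hypothesis $|T|\meg\ee|S|$ with $T\subseteq S$ gives $T=\emptyset$, and the inequality is vacuous) or $\pp(S)>0$, in which case the $(\mathbf{1}_D,\cals(A^n),\ee^2)$-uniformity of $S$ combined with $\pp(T)\meg \ee\pp(S)$ yields $|\pp_T(D)-\pp_S(D)|\mik \ee^2/\ee=\ee$, as desired. The whole argument is essentially a bookkeeping reduction: there is no analytic or combinatorial obstacle, only the parameter-balancing choice $\eta=\ee^2$ needed to absorb the $1/\ee$ loss from the relative-size condition $|T|\meg\ee|S|$.
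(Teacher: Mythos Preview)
Your proposal is correct and follows essentially the same approach as the paper: the paper derives the corollary directly from Proposition~\ref{p4.2} applied with the parameter $\eta=\ee^2$, using the observation (spelled out in the paragraph preceding the corollary) that $(\mathbf{1}_D,\cals(A^n),\ee^2)$-uniformity of $S$ together with $|T|\meg \ee|S|$ yields $|\pp_T(D)-\pp_S(D)|\mik\ee$. Your choice $N(k,\ee)=\mathrm{U}\big(k(k-1)/2,\,2,\,\ee^2\big)$ and the handling of the edge cases are exactly what the paper intends.
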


%----------------------$L_p$ graphons-------------------%

\subsection{$L_p$ graphons}

Our last application is an extension of the, so-called, \textit{strong regularity lemma for $L_2$ graphons} (see, e.g., \cite{L,LS}).
To state this extension we need to introduce some terminology and notation related to graphons. 

Let $(\bo,\calf,\pp)$ be a probability space and recall that a \textit{graphon}\footnote[1]{In several places 
in the literature, graphons are required to be $[0,1]$-valued, and the term \textit{kernel} is used for (not necessarily bounded)
integrable, symmetric random variables.} is an integrable random variable $W\colon\bo\times\bo\to\rr$ which is symmetric, that is, 
$W(x,y)=W(y,x)$ for every $x,y\in \bo$. If $p>1$ and $W$ is graphon which belongs to $L_p$, then $W$ is said to be an 
\textit{$L_p$ graphon} (see, e.g., \cite{BCCZ}). 

Now let $\calr$ be a finite partition of $\bo$ with $\calr\subseteq\calf$ and notice that the family
\begin{equation} \label{e4.16}
\calr^2=\{S\times T: S,T\in\calr\}
\end{equation}
is a finite partition of $\bo\times\bo$. As in Section 3, let $\cala_{\calr^2}$ be the $\sigma$-algebra on $\bo\times\bo$
generated by $\calr^2$ and observe that $\cala_{\calr^2}$ consists of measurable sets. If $W\colon\bo\times\bo\to\rr$ is
a graphon, then the conditional expectation of $W$ with respect to $\cala_{\calr^2}$ is usually denoted by $W_\calr$. 
Note that $W_{\calr}$ is also a graphon and satisfies (see, e.g., \cite{L})
\begin{equation} \label{e4.17}
\|W_{\calr}\|_{\square}\mik \|W\|_{\square}
\end{equation}
where $\|\cdot\|_{\square}$ is the cut norm defined in \eqref{e2.14}. On the other hand, by standard properties of the
conditional expectation (see, e.g., \cite{Du}), we have $\|W_{\calr}\|_{L_p}\mik \|W\|_{L_p}$ for any $p\meg 1$. It follows,
in particular, that $W_{\calr}$ is an $L_p$ graphon provided, of course, that $W\in L_p$.

We have the following corollary. 
\begin{cor}[Strong regularity lemma for $L_p$ graphons] \label{c4.5}
For every $0<\ee\mik 1$, every $1<p\mik 2$ and every positive function $h\colon\nn\to\rr$ there exists a positive
integer $\mathrm{s}(\ee,p,h)$ with the following property. If $(\bo,\calf,\pp)$ is a  probability space
and $W\colon\bo\times\bo\to\rr$ is an $L_p$ graphon with $\|W\|_{L_p}\mik 1$, then there exist a partition $\calr$ of $\bo$
with $\calr\subseteq\calf$ and $|\calr|\mik \mathrm{s}(\ee,p,h)$, and an $L_p$ graphon $U\colon\bo\times\bo\to\rr$ 
such that $\|W-U\|_{L_p}\mik \ee$ and $\|U-U_{\calr}\|_{\square}\mik h\big(|\calr|\big)$.
\end{cor}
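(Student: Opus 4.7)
My plan is to apply Corollary \ref{c3.2} to $W$, viewed as an $L_p$ random variable on the product space $\bo\times\bo$, equipped with the $2$-semiring $\cals_\square$ of measurable rectangles from Example \ref{ex2}. Given $\ee, p$ and $h$, I will first construct a growth function $F\colon\nn\to\rr$ that eventually outpaces the demand $h(|\calr|)$. Concretely, setting $G(n)=\max\{2/h(j):1\mik j\mik 2^{2n}\}$ and $F(n)=n+1+G(n)$ produces an increasing growth function with $F(n)\meg 2/h(j)$ for every $1\mik j\mik 2^{2n}$. Applying Corollary \ref{c3.2} with $k=2$, $\sigma=\ee$ and this $F$ then yields an integer $M\mik\mathrm{Reg}'(2,\ee,p,F)$, a partition $\calp=\{S_i\times T_i:i\in[M]\}\subseteq\cals_\square$ of $\bo\times\bo$, a finite refinement $\calq\subseteq\cals_\square$ of $\calp$ and a decomposition $W=f_{\mathrm{str}}+f_{\mathrm{err}}+f_{\mathrm{unf}}$ with $\|f_{\mathrm{err}}\|_{L_p}\mik\ee$ and $\|f_{\mathrm{unf}}\|_\square\mik 1/F(M)$.

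Next I extract the base partition of $\bo$: let $\calr$ be the partition into the nonempty atoms of the finite algebra generated by $\{S_i,T_i:i\in[M]\}$, so that $|\calr|\mik 2^{2M}$ and the product partition $\calr^2$ refines $\calp$, whence $\cala_\calp\subseteq\cala_{\calr^2}$. Provisionally set $U_0=W-f_{\mathrm{err}}=f_{\mathrm{str}}+f_{\mathrm{unf}}$. The $L_p$ bound is immediate, $\|W-U_0\|_{L_p}=\|f_{\mathrm{err}}\|_{L_p}\mik\ee$, and because $f_{\mathrm{str}}$ is $\cala_{\calr^2}$-measurable one gets
\begin{equation*}
U_0-(U_0)_\calr=f_{\mathrm{unf}}-\ave(f_{\mathrm{unf}}\,|\,\cala_{\calr^2}).
\end{equation*}

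The main technical obstacle is to bound $\|\ave(g\,|\,\cala_{\calr^2})\|_\square$ by $\|g\|_\square$, since Lemma \ref{l2.5}(b) does not apply directly: indeed $\cala_{\calr^2}\not\subseteq\cals_\square$, as unions of rectangles are generally not rectangles. I plan to handle this by a level set argument: for any $A,B\in\calf$ one has $\int_{A\times B}\ave(g\,|\,\cala_{\calr^2})=\int \alpha(x)\beta(y)g(x,y)\,d\pp$ with $\alpha=\ave(\mathbf{1}_A\,|\,\cala_\calr)$ and $\beta=\ave(\mathbf{1}_B\,|\,\cala_\calr)$ taking values in $[0,1]$; decomposing $\alpha$ and $\beta$ into level sets and exchanging integrals writes this as an average of $\int_{C_s\times D_t}g$ with $C_s,D_t\in\cala_\calr\subseteq\calf$, each bounded by $\|g\|_\square$. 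Consequently $\|U_0-(U_0)_\calr\|_\square\mik 2\|f_{\mathrm{unf}}\|_\square\mik 2/F(M)\mik h(|\calr|)$ by the choice of $F$.

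To finish, I symmetrize by setting $U(x,y)=(U_0(x,y)+U_0(y,x))/2$, which is a symmetric element of $L_p$, hence an $L_p$ graphon. Using $W=(W+W^T)/2$ together with $\|\varphi\|_{L_p}=\|\varphi^T\|_{L_p}$ gives $\|W-U\|_{L_p}\mik\|W-U_0\|_{L_p}\mik\ee$; symmetry of the $\sigma$-algebra $\cala_{\calr^2}$ makes conditional expectation commute with transposition, so $U_\calr=\bigl((U_0)_\calr+((U_0)_\calr)^T\bigr)/2$ and the same change of variables gives $\|U-U_\calr\|_\square\mik\|U_0-(U_0)_\calr\|_\square\mik h(|\calr|)$. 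Defining $\mathrm{s}(\ee,p,h)=2^{2\,\mathrm{Reg}'(2,\ee,p,F)}$ then ensures $|\calr|\mik\mathrm{s}(\ee,p,h)$ and completes the argument.
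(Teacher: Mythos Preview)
Your argument is correct, but it differs from the paper's proof in a couple of instructive ways. The paper applies Corollary~\ref{c3.2} to the $4$-semiring $\Sigma_{\square}$ of \emph{symmetric} rectangles from \eqref{e2.15} rather than to the $2$-semiring $\cals_{\square}$; the point of this choice is that a partition of $\bo\times\bo$ consisting of sets in $\Sigma_{\square}$ is already of the form $\calr^2$ for a partition $\calr$ of $\bo$, so one reads off $\calr$ directly with $|\calr|\mik|\calp|$ and no exponential loss. This simultaneously makes $W_{\mathrm{str}}$, $W_{\mathrm{err}}$ and $W_{\mathrm{unf}}$ symmetric, so that $U=W_{\mathrm{str}}+W_{\mathrm{unf}}$ is a graphon without any further symmetrisation, and the projection bound is obtained by combining the cited inequality \eqref{e4.17} with the comparison $\|\cdot\|_{\square}\mik 4\|\cdot\|_{\Sigma_{\square}}$ of \eqref{e2.16}. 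Your route---generating $\calr$ from the sides of the rectangles in $\calp$, reproving the projection inequality $\|\ave(g\,|\,\cala_{\calr^2})\|_{\square}\mik\|g\|_{\square}$ for non-symmetric $g$ via the level-set representation, and symmetrising at the end---is entirely valid and arguably more self-contained, but it pays for avoiding $\Sigma_{\square}$ with the blowup $|\calr|\mik 2^{2M}$ and the extra symmetrisation step. Note also that your level-set computation is essentially a proof of \eqref{e4.17} (in the generality you need), so you could have simply invoked that inequality once you observed that its standard proof does not use symmetry.
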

\begin{proof}
Fix the constants $\ee, p$ and the function $h$, and define $F\colon\nn\to\rr$ by the rule
\begin{equation} \label{e4.18}
F(n)=(n+1)+\sum_{i=0}^n \frac{8}{h(i)}.
\end{equation}
Notice that $F$ is a growth function. We set
\begin{equation} \label{e4.19}
\mathrm{s}(\ee,p,h)=\mathrm{Reg}'(4,\ee,p,F)
\end{equation}
and we claim that with this choice the result follows.

Indeed, let $(\bo,\calf,\pp)$ be a probability space and fix an $L_p$ graphon $W\colon\bo\times\bo\to\rr$ with 
$\|W\|_{L_p}\mik 1$. Also let $\Sigma_{\square}$ be the $4$-semiring on $\bo\times\bo$ which is defined via formula \eqref{e2.15}
for the given probability space $(\bo,\calf,\pp)$. We apply Corollary \ref{c3.2} to $\Sigma_{\square}$ and the random variable
$W$ and we obtain
\begin{enumerate}
\item[(a)] a partition $\calp$ of $\bo\times\bo$ with $\calp\subseteq\Sigma_{\square}$ and $|\calp|\mik\mathrm{Reg}'(4,\ee,p,F)$, and 
\item[(b)] a finite refinement $\calq$ of $\calp$ with $\calq\subseteq\Sigma_{\square}$ 
\end{enumerate}
such that, writing the graphon $W$ as $W_{\mathrm{str}}+W_{\mathrm{err}}+W_{\mathrm{str}}$ where 
$W_{\mathrm{str}}=\ave(W\, |\, \cala_{\calp})$, $W_{\mathrm{err}}=\ave(W \, | \, \cala_{\calq})-\ave(W \, | \, \cala_{\calp})$
and $W_{\mathrm{unf}}=W-\ave(W \, | \, \cala_{\calq})$, we have the estimates $\|W_{\mathrm{err}}\|_{L_p}\mik \ee$ and 
$\|W_{\mathrm{unf}}\|_{\Sigma_{\square}}\mik 1/F\big(|\calp|\big)$. Note that, by (a) and (b) and the definition of the $4$-semiring 
$\Sigma_{\square}$ in \eqref{e2.15}, there exist two finite partitions $\calr,\calz$ of $\bo$ with $\calr,\calz\subseteq\calf$ and 
such that $\calp=\calr^2$ and $\calq=\calz^2$. It follows, in particular, that the random variables $W_{\mathrm{str}}, W_{\mathrm{err}}$
and $W_{\mathrm{unf}}$ are all $L_p$ graphons.

We will show that the partition $\calr$ and the $L_p$ graphon $U\coloneqq W_{\mathrm{str}}+W_{\mathrm{unf}}$ are as desired. 
To this end notice first that
\begin{equation} \label{e4.20}
|\calr| \mik |\calr^2| = |\calp| \mik \mathrm{Reg}'(4,\ee,p,F)\stackrel{\eqref{e4.19}}{=}\mathrm{s}(\ee,p,h).
\end{equation}
Next observe that 
\begin{equation} \label{e4.21}
\|W-U\|_{L_p} = \| W_{\mathrm{err}}\|_{L_p} \mik \ee.
\end{equation}
Finally note that, by \eqref{e4.17}, we have $\|(W_{\mathrm{unf}})_{\calr}\|_{\square}\mik \|W_{\mathrm{unf}}\|_{\square}$.
Moreover, the fact that $\calp=\calr^2$ and the choice of $W_{\mathrm{str}}$ yield that $(W_{\mathrm{str}})_{\calr}=W_{\mathrm{str}}$.
Therefore,
\begin{eqnarray} \label{e4.22}
\|U-U_{\calr}\|_{\square} & \mik & 2\|W_{\mathrm{unf}}\|_{\square} \stackrel{\eqref{e2.16}}{\mik} 
8\|W_{\mathrm{unf}}\|_{\Sigma_{\square}} \mik \frac{8}{F\big(|\calp|\big)} \\
& \stackrel{\eqref{e4.20}}{\mik} & \frac{8}{F\big(|\calr|\big)} \stackrel{\eqref{e4.18}}{\mik} h\big( |\calr|\big) \nonumber
\end{eqnarray}
and the proof of Corollary \ref{c4.5} is completed. 
\end{proof}
\begin{rem} \label{r1}
Recently, Borgs, Chayes, Cohn and Zhao \cite{BCCZ} extended the weak regularity lemma to $L_p$ graphons
for any $p>1$. Their extension follows, of course, from Corollary \ref{c4.5}, but this reduction is rather ineffective 
since the bound obtained by Corollary \ref{c4.5} is quite poor. However, this estimate can be significantly improved
if instead of invoking Corollary \ref{c3.2}, one argues directly as in the proof of Lemma \ref{l3.4}. More precisely, 
note that for every $0<\ee\mik 1$, every $1< p\mik 2$, every probability space $(\bo,\calf,\pp)$ and every
$L_p$ graphon $W\colon \bo\times\bo\to\rr$ with $\|W\|_{L_p}\mik 1$ there exists a partition $\calr$ of $\bo$
with $\calr\subseteq \calf$ and 
\begin{equation} \label{e4.23}
|\calr|\mik 4^{(p-1)^{-1}\ee^{-2}}
\end{equation}
and such that $\|W-W_{\calr}\|_{\square}\mik \ee$. The estimate in \eqref{e4.23} matches the bound for the weak regularity lemma
for the case of $L_2$ graphons (see, e.g., \cite{L}) and is essentially optimal.
\end{rem}

%------------------------Appendix A---------------------------%

\appendix

\section{Martingale difference sequences} 

\numberwithin{equation}{section}

\subsection*{A.1}

Recall that a finite sequence $(f_i)_{i=0}^n$ of real-valued random variables on a probability space $(\bo,\calf,\pp)$ 
is said to be a \emph{martingale} if there exists an increasing sequence $(\calf_i)_{i=0}^n$ of sub-$\sigma$-algebras
of $\calf$ such that: (i) $f_i\in L_1(\bo,\calf_i,\pp)$ for every $i\in\{0,\dots,n\}$, and 
(ii) $f_i=\ave(f_{i+1}\, | \, \calf_i)$ if $n\meg 1$ and $i\in\{0,\dots,n-1\}$.

A \emph{martingale difference sequence} is the sequence of successive differences of a martingale. Specifically, 
a finite sequence $(d_i)_{i=0}^n$ of random variables on $(\bo,\calf,\pp)$ is a martingale difference
sequence if there exists a martingale $(f_i)_{i=0}^n$ such that $d_0=f_0$ and
\begin{equation} \label{ea.1}
d_i=f_i-f_{i-1}
\end{equation}
if $n\meg 1$ and $i\in [n]$. Note that this is equivalent to saying that there exists an increasing sequence $(\calf_i)_{i=0}^n$
of sub-$\sigma$-algebras of $\calf$ such that: (i) $d_i\in L_1(\bo,\calf_i,\pp)$ for every $i\in\{0,\dots,n\}$, and 
(ii) $\ave(d_i\, | \, \calf_{i-1})=0$ if $n\meg 1$ and $i\in [n]$.

\subsection*{A.2}

It is easy to see that martingale difference sequences are monotone basic sequences in $L_p$ for any $p\meg 1$; that is,
if $(d_i)_{i=0}^n$ is a martingale difference sequence in $L_p$ for some $p\meg 1$, then for every $0\mik k\mik n$ and
every $a_0,\dots,a_n\in\rr$ we have
\begin{equation} \label{ea.2}
\big\| \sum_{i=0}^k a_i d_i\big\|_{L_p} \mik \big\| \sum_{i=0}^n a_i d_i \big\|_{L_p}.
\end{equation}
It follows, in particular, that 
\begin{equation} \label{ea.3}
\big\| \sum_{i=k}^\ell d_i\big\|_{L_p} \mik 2\big\| \sum_{i=0}^n d_i \big\|_{L_p}
\end{equation}
for every $0\mik k\mik \ell\mik n$. Another basic property of martingale difference sequences is that they are orthogonal in $L_2$.
Therefore, for every martingale difference sequence $(d_i)_{i=0}^n$ in $L_2$ we have
\begin{equation} \label{ea.4}
\Big( \sum_{i=0}^n \|d_i\|_{L_2}^2 \Big)^{1/2} = \big\| \sum_{i=0}^n d_i \big\|_{L_2}.
\end{equation}
We will need the following extension of this fact.
\begin{prop} \label{pa.1}
Let $(\bo,\calf,\pp)$ be a probability space and $1<p \mik 2$. Then for every martingale difference 
sequence $(d_i)_{i=0}^n$ in $L_p(\Omega,\calf,\mathbb{P})$ we have
\begin{equation} \label{ea.5}
\Big( \sum_{i=0}^n \|d_i\|^2_{L_p} \Big)^{1/2} \mik \Big(\frac{1}{p-1}\Big)^{1/2} \cdot \big\| \sum_{i=0}^n d_i\big\|_{L_p}.
\end{equation}
\end{prop}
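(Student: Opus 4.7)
The strategy is to deduce Proposition~\ref{pa.1} from a sharp scalar (or functional) $2$-uniform convexity inequality for $L_p$ with $1 < p \leq 2$, and then iterate along the filtration. The key functional-analytic input is the Ball--Carlen--Lieb inequality: for $a, b \in \rr$ and $1 < p \leq 2$,
\[
\tfrac12\bigl(|a+b|^p + |a-b|^p\bigr) \geq \bigl(a^2 + (p-1)b^2\bigr)^{p/2}.
\]
Integrating this pointwise estimate over $\bo$ and then invoking the reverse Minkowski integral inequality (which is available because $p/2 \leq 1$) together with the convexity of $t \mapsto t^{2/p}$ (valid since $2/p \geq 1$) lifts it to the functional $2$-uniform convexity
\[
\|u\|_{L_p}^2 + (p-1)\|v\|_{L_p}^2 \leq \tfrac12\bigl(\|u+v\|_{L_p}^2 + \|u-v\|_{L_p}^2\bigr)
\]
valid for all $u, v \in L_p(\bo,\calf,\pp)$. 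The constant $p - 1$ here is sharp, as seen by taking $p \uparrow 2$ where the inequality reduces to the parallelogram identity.

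With this in hand, my plan is to establish by induction on $i$ the sharp conditional increment estimate
\[
\|f_i\|_{L_p}^2 \geq \|f_{i-1}\|_{L_p}^2 + (p-1)\|d_i\|_{L_p}^2,
\]
where $(f_i)$ is the martingale with $f_i = f_{i-1} + d_i$ and $\ave(d_i \mid \calf_{i-1}) = 0$. Telescoping this estimate down to $i = 0$ and using $d_0 = f_0$ together with $p - 1 \leq 1$ yields $\|f_n\|_{L_p}^2 \geq (p-1)\sum_{i=0}^n \|d_i\|_{L_p}^2$, which is equivalent to \eqref{ea.5}. In the Hilbert case $p = 2$ the increment estimate is simply Pythagoras, arising from $\ave(f_{i-1}\, d_i) = \ave\bigl(f_{i-1}\,\ave(d_i \mid \calf_{i-1})\bigr) = 0$; so the substance of the proof is to extract the analogous statement in $L_p$ for $p < 2$ from the Ball--Carlen--Lieb inequality combined with the mean-zero hypothesis.

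The main obstacle will be precisely this last step. The Ball--Carlen--Lieb inequality, applied with $u = f_{i-1}$ and $v = d_i$, produces only a symmetrized bound
\[
\|f_{i-1}\|_{L_p}^2 + (p-1)\|d_i\|_{L_p}^2 \leq \tfrac12\bigl(\|f_i\|_{L_p}^2 + \|f_{i-1} - d_i\|_{L_p}^2\bigr),
\]
and for general (non-conditionally-symmetric) martingales the auxiliary term $\|f_{i-1} - d_i\|_{L_p}^2$ may strictly exceed $\|f_i\|_{L_p}^2$, so the symmetrized inequality alone does not yield the one-sided increment. Ricard and Xu's contribution in \cite{RX} is to carry out this passage while preserving the sharp constant $p - 1$; this uses a clever convex-combination (or duality) argument exploiting simultaneously the $\calf_{i-1}$-measurability of $f_{i-1}$ and the mean-zero condition on $d_i$. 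Executing this step carefully is the technical heart of the proof.
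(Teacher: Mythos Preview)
Your outline is the same as the paper's: reduce Proposition~\ref{pa.1} to the one-step increment
\[
\|f_{i-1}\|_{L_p}^2 + (p-1)\,\|d_i\|_{L_p}^2 \ \mik\ \|f_i\|_{L_p}^2
\]
(this is exactly Proposition~\ref{pa.2} with $f=f_i$ and $\calb=\calf_{i-1}$), then telescope and absorb $\|d_0\|_{L_p}^2=\|f_0\|_{L_p}^2$ using $p-1\mik 1$. The problem is that you have correctly isolated the only nontrivial step and then not carried it out: you call the passage from the symmetric Ball--Carlen--Lieb inequality to the one-sided martingale increment ``a clever convex-combination (or duality) argument'' and stop. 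As it stands, the proposal is an outline with its heart excised.

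The paper supplies this step with a short pseudo-differentiation argument, which you should include. With $a=f_{i-1}$ and $b=d_i$, set
\[
\varphi(t)=\|a+tb\|_{L_p}^2-(p-1)\,t^2\|b\|_{L_p}^2.
\]
Applying the functional Ball--Carlen--Lieb inequality (your displayed bound, the paper's Proposition~\ref{p4}) with $u=a+tb$ and $v=hb$ shows that the second symmetric difference quotient of $\varphi$ is nonnegative at every $t$, so $\varphi$ is convex. Separately, $\psi(t)=\|a+tb\|_{L_p}^2$ is convex and, by the martingale condition $\ave(b\,|\,\calf_{i-1})=0$, is minimized at $t=0$ (since $\|a+tb\|_{L_p}\meg\|\ave(a+tb\,|\,\calf_{i-1})\|_{L_p}=\|a\|_{L_p}$); hence its right derivative at $0$ is nonnegative. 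Because the quadratic correction has zero derivative at $0$, the same holds for $\varphi$, so convexity forces $\varphi$ to be nondecreasing on $[0,1]$, and $\varphi(1)\meg\varphi(0)$ is precisely the increment inequality. Note that this is neither a convex-combination nor a duality argument; it is a monotonicity argument along the segment from $f_{i-1}$ to $f_i$, and the mean-zero hypothesis enters solely through $\psi'_+(0)\meg 0$.
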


\subsection*{A.3}

Proposition \ref{pa.1} follows by iterating the following martingale convexity inequality 
which is due to Ricard and Xu \cite{RX}.
\begin{prop} \label{pa.2}
Let $(\bo,\calf,\pp)$ be a probability space and $1<p \mik 2$. Then for every sub-$\sigma$-algebra $\calb$ of\, $\calf$ and every
$f\in L_p(\Omega,\calf,\pp)$ we have
\begin{equation} \label{ea.6}
\|\ave(f\, |\, \calb)\|^2_{L_p} + (p-1) \|f-\ave(f\, |\, \calb)\|^2_{L_p} \mik \|f\|^2_{L_p}. 
\end{equation}
\end{prop}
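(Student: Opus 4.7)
The plan is to reduce the conditional expectation inequality to a sharp pointwise scalar inequality, and then to exploit the martingale property of $h := f - \ave(f\,|\,\calb)$ in a crucial integration step. Throughout I write $g := \ave(f\,|\,\calb)$, so the desired bound becomes
\begin{equation*}
\|g\|_{L_p}^2 + (p-1)\|h\|_{L_p}^2 \mik \|g+h\|_{L_p}^2,
\end{equation*}
and the martingale hypothesis reads $\ave(h\,|\,\calb) = 0$.

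The first step is to establish the Ball--Carlen--Lieb scalar inequality: for every $1<p\mik 2$ and every $a,b\in\rr$,
\begin{equation*}
\frac{|a+b|^p + |a-b|^p}{2} \meg \bigl(a^2 + (p-1)b^2\bigr)^{p/2}.
\end{equation*}
I would prove this by dividing through by $|a|^p$ when $a\neq 0$ to reduce to the one-variable inequality $(|1+s|^p + |1-s|^p)/2 \meg (1+(p-1)s^2)^{p/2}$ on $s\in\rr$, and then verify it by comparing Taylor expansions at $s=0$ (the constant, linear, and quadratic terms match exactly) and a convexity/monotonicity analysis on the tails.

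The second step applies this inequality pointwise at $(a,b)=(g(\omega),h(\omega))$, integrates over $\pp$, and uses reverse Minkowski for the $L_{p/2}$ quasi-norm (valid because $p/2\mik 1$) to obtain the symmetric bound
\begin{equation*}
\frac{\|g+h\|_{L_p}^p + \|g-h\|_{L_p}^p}{2} \meg \bigl(\|g\|_{L_p}^2 + (p-1)\|h\|_{L_p}^2\bigr)^{p/2}.
\end{equation*}
Up to this point the martingale property has played no role, and the argument is symmetric under $h\leftrightarrow -h$.

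The main obstacle is that $\|g-h\|_{L_p}$ may exceed $\|g+h\|_{L_p}=\|f\|_{L_p}$, so the symmetric bound above is strictly weaker than the desired one-sided target. To break the symmetry one must genuinely exploit the martingale hypothesis. The route I would pursue is to consider $\phi(t) := \|g+th\|_{L_p}^p$ and observe that
\begin{equation*}
\phi'(0) = p\int|g|^{p-1}\mathrm{sgn}(g)\,h\,d\pp = p\int|g|^{p-1}\mathrm{sgn}(g)\,\ave(h\,|\,\calb)\,d\pp = 0,
\end{equation*}
since $|g|^{p-1}\mathrm{sgn}(g)$ is $\calb$-measurable. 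Combining this vanishing first derivative with the Taylor integral remainder $\phi(1)=\phi(0)+\int_0^1(1-t)\phi''(t)\,dt$ and a Hölder-type lower bound of the form $\phi''(t) = p(p-1)\int|g+th|^{p-2}h^2\,d\pp \meg p(p-1)\|h\|_{L_p}^2\|g+th\|_{L_p}^{p-2}$ should yield the one-sided estimate. Recovering the sharp constant $(p-1)$---which a naïve Taylor-plus-Hölder estimate tends to undershoot by a factor of $p/2$---is the most delicate part of the argument, and is precisely where Ricard and Xu deploy their refined two-parameter scalar inequality and convexity lemmas.
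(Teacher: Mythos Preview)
Your first two steps are correct and in fact recover (a slight sharpening of) the paper's Proposition~\ref{p4}, the Ball--Carlen--Lieb uniform convexity inequality. The difficulty is entirely in your third step, and as you yourself note, the Taylor-plus-H\"older route on $\phi(t)=\|g+th\|_{L_p}^p$ does not recover the sharp constant $(p-1)$: the integral $\int_0^1(1-t)\|g+th\|_{L_p}^{p-2}\,dt$ cannot be bounded below by $1/p$ without additional input, and the ``refined two-parameter scalar inequality'' you invoke is not actually specified. So the proposal is incomplete at exactly the crucial point.

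The paper's argument (following Ricard--Xu) avoids this obstacle by a cleaner choice of auxiliary function. Rather than $\phi(t)=\|g+th\|_{L_p}^p$, set
\[
\varphi(t)=\|g+th\|_{L_p}^2-(p-1)\,t^2\,\|h\|_{L_p}^2.
\]
Applying your already-established inequality \eqref{ea.7} with $x=g+th$ and $y=sh$ gives directly that $\varphi(t+s)+\varphi(t-s)-2\varphi(t)\meg 0$ for all $t,s$, so $\varphi$ is convex. For the first-order information one does not differentiate: the martingale property gives $\|g+th\|_{L_p}\meg\|\ave(g+th\,|\,\calb)\|_{L_p}=\|g\|_{L_p}$, so $t\mapsto\|g+th\|_{L_p}^2$ attains its minimum at $t=0$, and hence so does (to first order) $\varphi$. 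A convex function with nonnegative right derivative at $0$ is nondecreasing on $[0,1]$, so $\varphi(1)\meg\varphi(0)$, which is exactly \eqref{ea.6}. The point is that building the target constant $(p-1)$ into the definition of $\varphi$ converts the sharp-constant problem into a mere convexity check, and BCL is precisely calibrated to make that check succeed.
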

A remarkable feature of Proposition \ref{pa.2} is the fact that the constant $(p-1)$ in \eqref{ea.6} is best possible.
A basic ingredient of its proof is the following uniform convexity inequality for $L_p$ spaces which first appeared in 
the work of Ball, Carlen and Lieb \cite{BCL} (see also \cite[Lemma 4.32]{Pi}).
\begin{prop} \label{p4}
Let $(\Omega,\Sigma,\mu)$ be an arbitrary measure space and $1<p \mik 2$. Then for every $x,y\in L_p(\Omega,\Sigma,\mu)$ we have
\begin{equation} \label{ea.7}
\|x\|_{L_p}^2 + (p-1) \|y\|_{L_p}^2 \mik \frac{\|x+y\|_{L_p}^2+\|x-y\|_{L_p}^2}{2}.
\end{equation}
\end{prop}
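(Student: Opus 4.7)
The plan is to derive \eqref{ea.7} from the pointwise scalar inequality
\begin{equation}\label{e.scalar-bcl}
\frac{|a+b|^p+|a-b|^p}{2}\meg \big(a^2+(p-1)b^2\big)^{p/2}
\end{equation}
valid for $a,b\in\rr$ and $1<p\mik 2$, by a short chain of Minkowski-type estimates. Everything apart from \eqref{e.scalar-bcl} is formal.

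Granting \eqref{e.scalar-bcl}, I would first apply it pointwise with $a=x(\omega)$, $b=y(\omega)$ and integrate over $(\bo,\Sigma,\mu)$ to obtain
\begin{equation*}
\frac{\|x+y\|_{L_p}^p+\|x-y\|_{L_p}^p}{2}\meg \int_\bo \big(|x|^2+(p-1)|y|^2\big)^{p/2}\,d\mu.
\end{equation*}
Since $0<p/2\mik 1$, the reverse Minkowski inequality with exponent $p/2$, applied to the non-negative functions $|x|^2$ and $(p-1)|y|^2$, yields
\begin{equation*}
\Big(\int_\bo\big(|x|^2+(p-1)|y|^2\big)^{p/2}\,d\mu\Big)^{2/p}\meg \|x\|_{L_p}^2+(p-1)\|y\|_{L_p}^2.
\end{equation*}
Combining these two estimates and raising to the $2/p$ power then gives
\begin{equation*}
\Big(\frac{\|x+y\|_{L_p}^p+\|x-y\|_{L_p}^p}{2}\Big)^{2/p}\meg \|x\|_{L_p}^2+(p-1)\|y\|_{L_p}^2.
\end{equation*}
Finally, since $2/p\meg 1$, the function $t\mapsto t^{2/p}$ is convex on $[0,\infty)$, so Jensen's inequality for the uniform measure on two points gives $\big(\tfrac{u+v}{2}\big)^{2/p}\mik \tfrac{u^{2/p}+v^{2/p}}{2}$ for every $u,v\meg 0$; applying this with $u=\|x+y\|_{L_p}^p$ and $v=\|x-y\|_{L_p}^p$ closes the chain.

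The main obstacle is the scalar inequality \eqref{e.scalar-bcl}, a delicate calculus fact. By the homogeneity of both sides in $(a,b)$ I would reduce to $a=1$ (the case $a=0$ being trivial), and by the symmetry $b\mapsto -b$ further to $b\meg 0$; the inequality then amounts to showing that
\begin{equation*}
F(b)\coloneqq\frac{(1+b)^p+|1-b|^p}{2}-\big(1+(p-1)b^2\big)^{p/2}\meg 0
\end{equation*}
on $[0,\infty)$. One verifies $F(0)=F'(0)=0$ and then carries out a second-derivative analysis on $[0,1]$ and $[1,\infty)$ separately, treating the mild singularity of $|1-b|^p$ at $b=1$ by a limiting argument; this is precisely the calculus computation of Ball, Carlen and Lieb \cite{BCL}.
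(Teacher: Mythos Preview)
Your argument is correct and is essentially the original Ball--Carlen--Lieb proof. Note, however, that the paper does not actually give its own proof of this proposition: it merely states the inequality and cites \cite{BCL} and \cite[Lemma 4.32]{Pi}, then uses it to derive Proposition \ref{pa.2}. So there is no ``paper's proof'' to compare against; what you have written is precisely the standard derivation (pointwise scalar inequality, reverse Minkowski for exponent $p/2\mik 1$, then convexity of $t\mapsto t^{2/p}$) that one finds in those references.
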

The deduction of the martingale inequality \eqref{ea.6} from \eqref{ea.7} is done via an elegant pseudo-differentiation argument
which we will briefly describe for the convenience of the reader.

Let $I$ be an open interval of $\rr$ and let $\varphi\colon I\to \rr$ be a function. Also let $t\in I$ and recall that the 
\emph{pseudo-derivative of second order of $\varphi$ at $t$} is defined by 
\[ D^2\varphi(t)\coloneqq \liminf_{h\to 0^+} \frac{\varphi(t+h)+\varphi(t-h)-2\varphi(t)}{h^2}.\] 
Observe that if $\varphi$ is twice differentiable at $t$, then $\varphi''(t)=D^2\varphi(t)$. Also note that if $D^2\varphi(t)\meg 0$ 
for every $t\in I$, then $\varphi$ is convex.

Now let $(\bo,\calf,\pp)$, $p$, $\calb$ and $f$ be as in Proposition \ref{pa.2}. We set $a=\ave(f\, |\, \calb)$ and
$b=f-\ave(f\, | \, \calb)$, and we define $\varphi\colon \rr\to\rr$ by 
\begin{equation} \label{ea.8}
\varphi(t)= \|a+tb\|_{L_p}^2 - (p-1)\, t^2 \|b\|_{L_p}^2. 
\end{equation}
Using \eqref{ea.7}, it is easy to see that $D^2\varphi(t)\meg 0$ for every $t\in\rr$ and, consequently, the function $\varphi$ is convex.
Next observe that the function $\psi\colon \rr\to\rr$ defined by $\psi(t)=\|a+tb\|^2_{L_p}$ is also convex. Moreover, we have
\[ \|a+tb\|_{L_p} \meg \|\ave(a+tb\, |\,\calb )\|_{L_p} = \|a\|_{L_p} \]
which yields that the right derivative of $\psi$ at $0$ is positive. Noticing that the right derivative of $\varphi$ at $0$ coincides 
with that of $\psi$, we conclude that $\varphi$ must be increasing on $[0,1]$ which, in turn, easily implies \eqref{ea.6}. For more details, 
as well as noncommutative extensions, we refer to \cite{RX}. 

%------------------------Appendix B---------------------------%

\section{ }

\numberwithin{equation}{section}

Our goal in this appendix is to use Theorem \ref{t3.1} to show the well-known fact that, for any $1<p\mik 2$, every $L_p$ bounded martingale 
is $L_p$ convergent  (see, e.g.,~\cite{Du}). Besides its intrinsic interest, this result also implies that Theorem \ref{t3.1} does not hold
true for the end-point case $p=1$. In fact, based on the argument below, one can easily construct a counterexample to Theorem \ref{t3.1}
using any $L_1$ bounded martingale which is not $L_1$ convergent.

We will need the following known approximation result (see, e.g., \cite{Pi}). We recall the proof for the convenience of the reader.
\begin{lem} \label{lb.1}
Let $(\bo,\calf,\pp)$ be a probability space and $p\meg 1$. Also let $(g_i)$ be a martingale in $L_p(\bo,\calf,\pp)$ and $\delta>0$.
Then there exist an increasing sequence $(\calf_i)$ of finite $\text{sub-}\sigma\text{-algebras}$ of $\calf$ and a martingale $(f_i)$ 
adapted to the filtration $(\calf_i)$ such that $\|g_i-f_i\|_{L_p}\mik \delta$ for every $i\in\nn$.
\end{lem}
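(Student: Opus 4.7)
The plan is to approximate each $g_n$ individually in $L_p$ by a simple function, and then assemble the candidate martingale by projecting the original martingale $(g_n)$ onto a filtration built from these approximations. Let $(\mathcal{G}_i)$ be a filtration to which $(g_i)$ is adapted. First, using the density of simple functions in $L_p$, for each $i\in\nn$ I would select a finite sub-$\sigma$-algebra $\mathcal{H}_i$ of $\mathcal{G}_i$ and an $\mathcal{H}_i$-measurable random variable $h_i$ with $\|g_i-h_i\|_{L_p}\mik \delta/2$. Concretely, one picks a simple $\mathcal{G}_i$-measurable function $h_i=\sum_{j=1}^{N_i}a_j\mathbf{1}_{A_j^{(i)}}$ with $A_j^{(i)}\in \mathcal{G}_i$ and sets $\mathcal{H}_i=\sigma\bigl(A_j^{(i)}:1\mik j\mik N_i\bigr)$.

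Next I would set $\calf_n=\sigma(\mathcal{H}_0\cup\cdots\cup \mathcal{H}_n)$ and $f_n=\ave(g_n\,|\,\calf_n)$. The sequence $(\calf_n)$ is visibly increasing, and each $\calf_n$ is finite since it is generated by finitely many finite $\sigma$-algebras. Moreover $\calf_n\subseteq \mathcal{G}_n$, because $\mathcal{H}_i\subseteq\mathcal{G}_i\subseteq\mathcal{G}_n$ for every $i\mik n$. To see that $(f_n)$ is a martingale adapted to $(\calf_n)$, I would apply the tower property twice, using the inclusion $\calf_n\subseteq\mathcal{G}_n$ to obtain $\ave(f_{n+1}\,|\,\calf_n)=\ave(g_{n+1}\,|\,\calf_n)=\ave\bigl(\ave(g_{n+1}\,|\,\mathcal{G}_n)\,\big|\,\calf_n\bigr)=\ave(g_n\,|\,\calf_n)=f_n$.

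The error bound is then immediate from the triangle inequality together with the $L_p$-contractivity of conditional expectation: since $\mathcal{H}_n\subseteq\calf_n$, the random variable $h_n$ is $\calf_n$-measurable, hence $\ave(h_n\,|\,\calf_n)=h_n$, and therefore $\|g_n-f_n\|_{L_p}\mik \|g_n-h_n\|_{L_p}+\|\ave(h_n-g_n\,|\,\calf_n)\|_{L_p}\mik 2\|g_n-h_n\|_{L_p}\mik\delta$. The only conceptual point worth flagging is that one cannot simply take $f_n=h_n$: this choice would destroy the martingale relation. Defining $(\calf_n)$ from the $\mathcal{H}_i$ but taking $(f_n)$ to be the $(\calf_n)$-projection of the original $(g_n)$ sidesteps this obstacle at the modest cost of the factor of $2$ in the error estimate, which is absorbed by the $\delta/2$ chosen in the initial approximation.
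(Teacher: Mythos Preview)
Your proof is correct. Both your argument and the paper's rely on the same three ingredients---density of simple functions, the tower property, and the $L_p$-contractivity of conditional expectation---but they are organised differently. The paper approximates the martingale \emph{differences} $\Delta_i=g_i-g_{i-1}$ by simple functions $s_i$ with geometrically decaying error $\|\Delta_i-s_i\|_{L_p}\mik\delta/2^{i+2}$, builds the finite filtration $(\calf_i)$ from those approximants, defines $d_i=\ave(\Delta_i\,|\,\calf_i)$, and sets $f_i=d_0+\cdots+d_i$; the summable error sequence is needed so that $\sum_k\|\Delta_k-d_k\|_{L_p}\mik\delta$. You instead approximate the martingale \emph{values} $g_n$ directly with a uniform error $\delta/2$, take $\calf_n$ to be the join of the resulting finite algebras, and set $f_n=\ave(g_n\,|\,\calf_n)$. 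Your route is a little more economical: it avoids the need for a summable error and yields the bound in one stroke via $\|g_n-f_n\|_{L_p}\mik 2\|g_n-h_n\|_{L_p}$. The paper's route, on the other hand, makes the martingale structure of $(f_i)$ more explicit by exhibiting its difference sequence. Either argument proves the lemma.
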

\begin{proof}
Fix a filtration $(\calb_i)$ such that $(g_i)$ is adapted to $(\calb_i)$ and let $(\Delta_i)$ be the martingale difference
sequence associated with $(g_i)$. Recursively and using the fact that the set of simple functions is dense in $L_p$,
we select an increasing sequence $(\calf_i)$ of finite sub-$\sigma$-algebras of $\calf$ and a sequence $(s_i)$ of simple
functions such that for every $i\in\nn$ we have that: (i) $\calf_i$ is contained in $\calb_i$, 
(ii) $\|\Delta_i-s_i\|_{L_p}\mik \delta/2^{i+2}$, and (iii) $s_i\in L_p(\bo,\calf_i,\pp)$. For every $i\in\nn$ let 
$d_i=\ave(\Delta_i\, | \, \calf_i)$ and notice that the sequence $(d_i)$ is a martingale difference sequence since, by (i),
\begin{eqnarray} \label{eb.1}
\ave(d_{i+1}\, | \, \calf_i) & = & \ave\big(\ave(\Delta_{i+1}\, | \, \calf_{i+1})\, | \, \calf_i\big) \\
& = & \ave(\Delta_{i+1}\, | \, \calf_i) = \ave\big(\ave(\Delta_{i+1}\, | \, \calb_i)\, | \, \calf_i\big) =0. \nonumber
\end{eqnarray}
Thus, setting $f_i=d_0+\dots+ d_i$, we see that $(f_i)$ is a martingale adapted to the filtration $(\calf_i)$. Moreover, 
by (ii) and (iii), for every $i\in\nn$ we have
\begin{eqnarray} \label{eb.2}
\ \ \ \ \ \ \|g_i-f_i\|_{L_p} \!\! & \mik & \sum_{k=0}^i \|\Delta_k-d_k\|_{L_p} \mik \frac{\delta}{2} + \sum_{k=0}^i \|s_k-d_k\|_{L_p} \\
& = & \frac{\delta}{2} + \sum_{k=0}^i \|\ave(s_k-\Delta_k\, | \, \calf_k)\|_{L_p} 
\mik \frac{\delta}{2} + \sum_{k=0}^i \|s_k-\Delta_k\|_{L_p} \mik \delta \nonumber
\end{eqnarray}
and the proof is completed. 
\end{proof}
Now fix $1<p\mik 2$ and a probability space $(\bo,\calf,\pp)$, and assume, towards a contradiction, 
that there exists a bounded martingale $(g_i)$ in $L_p(\bo,\calf,\pp)$ which is not norm convergent. By \eqref{ea.3}, we see 
that $(g_i)$ has no convergent subsequence whatsoever. Therefore, by passing to a subsequence of $(g_i)$ and rescaling, 
we may assume that there exists $0<\ee\mik 1/3$ such that: (i) $\|g_i\|_{L_p}\mik 1/2$ for every $i\in\nn$, and (ii)
$\|g_i-g_j\|_{L_p}\meg 3\ee$ for every $i,j\in\nn$ with $i\neq j$. By Lemma \ref{lb.1} applied to the martingale 
$(g_i)$ and the constant ``$\delta=\ee$", there exist
\begin{enumerate}
\item[(P1)] an increasing sequence $(\calf_i)$ of finite $\text{sub-}\sigma\text{-algebras}$ of $\calf$, and
\item[(P2)] a martingale $(f_i)$ adapted to the filtration $(\calf_i)$
\end{enumerate}
such that $\|g_i-f_i\|_{L_p}\mik \ee$ for every $i\in\nn$. Hence,
\begin{enumerate}
\item[(P3)] $\|f_i\|_{L_p}\mik 1$ for every $i\in\nn$, and
\item[(P4)] $\|f_i-f_j\|_{L_p}\meg \ee$ for every $i,j\in\nn$ with $i\neq j$. 
\end{enumerate}
Notice that, by (P1), for every $i\in\nn$ the space $L_p(\bo,\calf_i,\pp)$ is finite-dimensional.
Since $\|\cdot\|_{\calf_i}$ is a norm on $L_p(\bo,\calf_i,\pp)$, there exists a constant $C_i\meg 1$ such that 
\begin{equation} \label{eb.3}
\|f\|_{\calf_i} \mik \|f\|_{L_p} \mik C_i \|f\|_{\calf_i}
\end{equation}
for every $f\in L_p(\bo,\calf_i,\pp)$.

Define $F\colon \nn\to\rr$ by the rule
\begin{equation} \label{eb.4}
F(i)= (i+1) + (8/\ee) \sum_{j=0}^i C_i
\end{equation}
and observe that $F$ is a growth function. Next, set
\begin{equation} \label{eb.5}
n=F\big(\mathrm{Reg}(1,1,\ee/8,p,F)\big)+1
\end{equation}
and let $(\cals_i)$ be defined by $\cals_i=\calf_i$
if $i\mik n$ and $\cals_i=\calf_n$ if $i>n$. Clearly, $(\cals_i)$ is an increasing sequence of $1$-semirings on $\bo$. We apply
Theorem \ref{t3.1} to the probability space $(\bo,\calf_n,\pp)$, the sequence $(\cals_i)$ and the random variable $f_n$, and we obtain
a natural number $N\mik \mathrm{Reg}(1,1,\ee/8,p,F)$, a finite partition $\calp$ of $\bo$ with $\calp\subseteq \cals_N$ and a finite
refinement $\calq$ of $\calp$ such that, writing $f_n=f_{\mathrm{str}}+f_{\mathrm{err}}+f_{\mathrm{unf}}$ where
\[ f_{\mathrm{str}}=\ave(f_n \, | \, \cala_{\calp}), \ \ f_{\mathrm{err}}=\ave(f_n \, | \, \cala_{\calq})-\ave(f_n \, | \, \cala_{\calp})
\ \text{ and } \ f_{\mathrm{unf}}=f_n-\ave(f_n \, | \, \cala_{\calq}),\]
we have that $\|f_{\mathrm{err}}\|_{L_p}\mik \ee/8$ and 
$\|f_{\mathrm{unf}}\|_{\cals_i}\mik 1/F(i)$ for every $i\in\{0,\dots,F(N)\}$. In particular, by the choice of $n$
and $(\cals_i)$, we see that 
\begin{equation} \label{eb.6}
\|f_{\mathrm{err}}\|_{L_p}\mik \frac{\ee}{8} \ \text{ and } \ \|f_{\mathrm{unf}}\|_{\calf_{N+1}}\mik \frac{1}{F(N+1)}.
\end{equation}

Now observe that, by property (P2),
\begin{equation} \label{eb.7}
f_N=\ave(f_n\, | \,\calf_N)= \ave(f_{\mathrm{str}}\, | \, \calf_N)+\ave(f_{\mathrm{err}}\, | \, \calf_N) +
\ave(f_{\mathrm{unf}}\, | \, \calf_N)
\end{equation}
and, similarly, 
\begin{equation} \label{eb.8}
f_{N+1}=\ave(f_n\, | \,\calf_{N+1})= \ave(f_{\mathrm{str}}\, | \, \calf_{N+1})+\ave(f_{\mathrm{err}}\, | \, \calf_{N+1})
+ \ave(f_{\mathrm{unf}}\, | \, \calf_{N+1}).
\end{equation}
The fact that $\calp\subseteq\cals_N$ yields that $\cala_{\calp}\subseteq \calf_N\subseteq \calf_{N+1}$ and so 
\begin{equation} \label{eb.9}
f_{\mathrm{str}}=\ave(f_{\mathrm{str}}\, | \, \calf_N) = \ave(f_{\mathrm{str}}\, | \, \calf_{N+1}).
\end{equation}
On the other hand, by \eqref{eb.6}, we have
\begin{equation} \label{eb.10}
\|\ave(f_{\mathrm{err}}\, | \, \calf_N)\|_{L_p}\mik \frac{\ee}{8} \ \text{ and } \ 
\|\ave(f_{\mathrm{err}}\, | \, \calf_{N+1})\|_{L_p} \mik \frac{\ee}{8}. 
\end{equation}
Finally, notice that $\ave(f_{\mathrm{unf}}\, | \, \calf_N)\in L_p(\bo,\calf_N,\pp)$. Thus, by \eqref{eb.3} and
Lemma \ref{l2.5}, we obtain that
\begin{eqnarray} \label{eb.11}
\|\ave(f_{\mathrm{unf}}\, | \, \calf_N)\|_{L_p} & \mik & C_N \|\ave(f_{\mathrm{unf}}\, | \, \calf_N)\|_{\calf_N}
\mik C_N \|f_{\mathrm{unf}}\|_{\calf_N} \\
& \mik & C_N \|f_{\mathrm{unf}}\|_{\calf_{N+1}} \stackrel{\eqref{eb.6}}{\mik} \frac{C_N}{F(N+1)}
\stackrel{\eqref{eb.4}}{\mik} \frac{\ee}{8}. \nonumber
\end{eqnarray}
With identical arguments we see that 
\begin{equation} \label{eb.12}
\|\ave(f_{\mathrm{unf}}\, | \, \calf_{N+1})\|_{L_p} \mik \frac{\ee}{8}.
\end{equation}
Combining \eqref{eb.7}--\eqref{eb.12}, we conclude that $\|f_N-f_{N+1}\|_{L_p}\mik \ee/2$ which contradicts, of course, property (P4). 
Hence, every bounded martingale in $L_p(\bo,\calf,\pp)$ is norm convergent, as desired.

%-------------------------------------------------------------------%
%                           Bibliography                            %
%-------------------------------------------------------------------%

\end{document}